\documentclass[12pt]{article}
\usepackage{amsmath,amssymb,enumerate,amsthm}
\usepackage[colorlinks,bookmarksopen,bookmarksnumbered,citecolor=red]{hyperref}
\usepackage{graphicx}
\usepackage{amsmath}
\numberwithin{equation}{section}
\usepackage{amsfonts}
\usepackage{a4wide}
\usepackage{mathrsfs}
\usepackage[numbers,sort&compress]{natbib}
\newtheorem{thm}{Theorem}[section]
\newtheorem{lem}[thm]{Lemma}
\newtheorem{prop}[thm]{Proposition}
\newtheorem{defn}[thm]{Definition}
\newtheorem{rem}[thm]{Remark}

\title{Sharp Exponent of Stable Standing Waves  for the Perturbated Hartree Equation}
\author{Guoyi Fu, Shanshan Fu, Xiaoguang Li, Jian  Zhang, Shihui Zhu\thanks{Corresponding author \textit{E-mail address}: shihuizhumath@163.com}}
\date{}
\begin{document}
	\maketitle
	\begin{abstract}
		This paper is concerned with the stability of standing waves for the mass-critical Hartree equation with a focusing perturbation by the variational method.  The profile decomposition theory is employed to prove the attainability of the cross constrained variational problem, and then the comparison of two cross constrained  variational problems is derived. The sharp criteria of blowup,  the orbital stability, and strong instability of standing waves without any frequency constraint are obtained. This improves the  cross constrained variational argument proposed by Zhang (2005).
	\end{abstract}
	
	\textbf{Keywords:} mass-critical Hartree equation, focusing perturbation, cross constrained variational method, 
	standing wave, stability. 
\section{Introduction}
\subsection{Setting of the problem}
We consider the following mass-critical Hartree equation with a  focusing perturbation 
\begin{equation}\label{Hartree equation}
	i\psi_{t}+\Delta\psi+(|x|^{-2}*|\psi|^{2})\psi+|\psi|^{p-1}\psi=0, \qquad (t, x)\in\mathbb{R}^{+}\times\mathbb{R}^{D}, 
\end{equation}
where $\psi:=\psi(t, x)$ is a complex valued wave function, $i^{2}=-1$, $D\ge 3$, $1<p<1+\frac{4}{D-2}$, $\Delta$ is the Laplace operator, and $*$ stands for the convolution. 
Impose the initial value of Eq. \eqref{Hartree equation},
	\begin{equation}\label{initial data}
		\psi(0, x)=\psi_{0}, \qquad x\in\mathbb{R}^{D}. 
	\end{equation}
Fang and Han \cite{FH2011} established the local well-posedness in $H^{1}(\mathbb{R}^{D})$, assuming that  $D\ge 3$ and $1<p<1+\frac{4}{D-2}$, if the initial datum $\psi_{0}\in H^{1}(\mathbb{R}^{D})$, then there exists a unique solution $\psi(t, x)$ of the Cauchy problem (\ref{Hartree equation})-(\ref{initial data}) such that $\psi(t, x)\in C([0, T); H^{1}(\mathbb{R}^{D}))$ and either $T=+\infty$ (global existence) or else $T<+\infty$ and 
	\begin{equation*}
		\lim_{t\to T}\|\psi(t)\|_{H^{1}(\mathbb{R}^{D})}=+\infty\quad\text{(blow-up)}. 
	\end{equation*}
	Besides, the following conservation laws holds for all $t\in[0,T)$. 
		 \begin{itemize}
		\item   Conservation of mass
		\begin{equation}
			\mathscr{M}[\psi]=\int_{\mathbb{R}^{D}} |\psi|^{2}dx=\mathscr{M}[\psi_{0}].
		\end{equation}
		\item  Conservation of energy
		\begin{equation}
			\mathscr{E}[\psi]=\frac{1}{2}\int_{\mathbb{R}^{D}} |\nabla\psi|^{2}dx-\frac{1}{4}\int_{\mathbb{R}^{D}}(|x|^{-2}*|\psi|^{2})|\psi|^{2}dx-\frac{1}{p+1}\int_{\mathbb{R}^{D}} |\psi|^{p+1}dx=\mathscr{E}[\psi_0]. 
		\end{equation} 
	\end{itemize}


In this present paper, we investigate sharp criteria  of blowup and stability of standing waves
for Eq. \eqref{Hartree equation}. Indeed, in the first part, with the spirit of cross constrained variational method, we investigate the new variational characterization by profile decomposition, see Proposition~\ref{noempty}. Then, the comparison of two constrained variational problems is obtained, see Proposition \ref{compare}, which may answer the open problem arose in \cite{Zhang2005}. On the basis of these results, we establish the sharp threshold for global existence and blow up of solutions, see Theorem \ref{Theorem 4.1}. 

For the rest of this paper, we are focusing on the sharp criteria for stability of standing waves for Eq. \eqref{Hartree equation}. More precisely, when $1<p<1+\frac{4}{D}$, by utilizing profile decomposition, we prove the orbital stability of standing waves under condition $\|\psi_{0}\|_{2}^{2}<\|\nabla W\|_{2}^{2}$, see Theorem \ref{Theorem 5.1}. While, for $1+\frac{4}{D}\le p<1+\frac{4}{D-2}$, by combing the blow-up result in Theorem  \ref{Theorem 4.1}, the strong instability of standing waves with all positive frequency is derived, see Theorem \ref{Theorem 5.4}. Therefore, the  above stability results reveal the sharp criteria in the sense of the critical exponent $p=1+\frac{4}{D}$.  

\subsection{Variational characterization of ground states}
Substituting the standing wave $\psi(t, x)=u(x)e^{i\omega t}$ into Eq.~\eqref{Hartree equation} yields the following elliptic equation:
 \begin{equation}\label{elliptic}
	-\omega u+\Delta u+(|x|^{-2}*|u|^{2})u+|u|^{p-1}u=0. 
\end{equation}
Now, we introduce the Lagrange functional 
\begin{equation}\label{lagrange functional}
	\mathcal{L}[u]=\frac{1}{2}\mathcal{K}[u]-\frac{1}{p+1}\int_{\mathbb{R}^{D}} |u|^{p+1}dx-\frac{1}{4}\int_{\mathbb{R}^{D}} (|x|^{-2}*|u|^{2})|u|^{2}dx  
\end{equation}
and the Nehari functional
\begin{equation}\label{nehari}
	\mathcal{N}[u]=\mathcal{K}[u]-\int_{\mathbb{R}^{D}} |u|^{p+1}dx-\int_{\mathbb{R}^{D}} (|x|^{-2}*|u|^{2})|u|^{2}dx,  
\end{equation}
where $\mathcal{K}[u]=\int_{\mathbb{R}^{D}} |\nabla u|^{2}dx+\omega\int_{\mathbb{R}^{D}} |u|^{2}dx$. 
It follows from the Sobolev embedding inequality that above functionals are well-defined. 

Now, we investigate the  following manifold 
\begin{equation}
	\mathfrak{N}:=\{u\in H^{1}(\mathbb{R}^{D})\backslash  \{0\},\ \mathcal{N}[u]=0\} 
\end{equation}
and a variational problem on it defined by 
\begin{equation}\label{variational problem}
	d_{\mathfrak{N}}=\inf_{u\in\mathfrak{N}}\mathcal{L}[u]. 
\end{equation}
Indeed, this variational problem can be attained at some $u\in\mathfrak{N}$, and due to the variational structure of $\mathcal{L}[u]$ and $\mathcal{N}[u]$, the minimizer directly corresponds to  the nontrivial ground state solution of Eq. \eqref{elliptic}, these lead to the following result. 
\begin{prop}\label{noempty}
	Let $D\ge 3$ and $\omega>0$. If $1+\frac{4}{D}\le p<1+\frac{4}{D-2}$, then there exists $u\in \mathfrak{N}$ such that $\mathcal{L}[u]=d_{\mathfrak{N}}$, and $u$ is the ground state solution of Eq. \eqref{elliptic}.  
\end{prop}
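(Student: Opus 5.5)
Here is the plan; the key observation is that on $\mathfrak{N}$ the functional $\mathcal{L}$ controls $\mathcal{K}$. For $u\in\mathfrak{N}$ we have
\begin{equation*}
\mathcal{L}[u]=\mathcal{L}[u]-\tfrac14\mathcal{N}[u]=\tfrac14\mathcal{K}[u]+\Big(\tfrac14-\tfrac1{p+1}\Big)\int_{\mathbb{R}^{D}}|u|^{p+1}dx ,
\end{equation*}
and both terms are nonnegative since $p>3$ when $D\ge3$ and $p\ge1+\frac4D$ (for $D=3$, $1+\frac43<3$, so I need the alternative split $\mathcal{L}-\frac1{p+1}\mathcal{N}=(\frac12-\frac1{p+1})\mathcal{K}+(\frac1{p+1}-\frac14)\int(|x|^{-2}*|u|^2)|u|^2$; I will use whichever combination has nonnegative coefficients).

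\textbf{Step 1: $d_{\mathfrak{N}}>0$.} By Hardy--Littlewood--Sobolev and Sobolev, $\int(|x|^{-2}*|u|^2)|u|^2\le C\|u\|_{H^1}^4$ and $\int|u|^{p+1}\le C\|u\|_{H^1}^{p+1}$. With $\mathcal{K}[u]\sim\|u\|_{H^1}^2$ for $\omega>0$, the constraint $\mathcal{N}[u]=0$ gives $\mathcal{K}[u]\le C(\mathcal{K}[u]^2+\mathcal{K}[u]^{(p+1)/2})$. Hence $\mathcal{K}[u]\ge\delta>0$ on $\mathfrak{N}$, and the identity above yields $d_{\mathfrak{N}}\ge\delta/4>0$ (or $\ge(\frac12-\frac1{p+1})\delta$ with the alternative split). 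Also $\mathfrak{N}\neq\emptyset$: for any $u\neq0$, $\lambda u\in\mathfrak{N}$ for a unique $\lambda>0$, because $\lambda^{-2}$ times the nonlinear terms of $\mathcal{N}[\lambda u]$ is increasing in $\lambda$.

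\textbf{Step 2: compactness of a minimizing sequence.} Take $u_n\in\mathfrak{N}$ with $\mathcal{L}[u_n]\to d_{\mathfrak{N}}$. The identity shows $(u_n)$ is bounded in $H^1$. By the profile decomposition (or more simply, Lions' lemma plus translation), either $u_n\to0$ in $L^{p+1}$ and in the Hartree norm, or there is a nonvanishing weak limit after translation. Vanishing is excluded: it would give $\mathcal{K}[u_n]=\mathcal{N}$-nonlinear terms $\to0$, contradicting $\mathcal{K}\ge\delta$. So, after translating, $u_n\rightharpoonup u\neq0$ in $H^1$.

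For the Brezis--Lieb splitting write $u_n=u+r_n$. Then $\mathcal{K}$, $\int|\cdot|^{p+1}$ and the Hartree term all split asymptotically; the Hartree term needs a nonlocal Brezis--Lieb lemma, which follows from HLS. Hence $\mathcal{N}[u_n]=\mathcal{N}[u]+\mathcal{N}[r_n]+o(1)$ and the same for the auxiliary functional $J=\mathcal{L}-\theta\mathcal{N}$ with nonnegative coefficients. If $\mathcal{N}[u]>0$, then $\mathcal{N}[r_n]<0$ for large $n$, so some $\lambda_n\in(0,1)$ has $\lambda_nr_n\in\mathfrak{N}$. This gives $d_{\mathfrak{N}}\le J[\lambda_nr_n]\le J[r_n]=d_{\mathfrak{N}}-J[u]+o(1)$, contradicting $J[u]>0$. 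So $\mathcal{N}[u]\le0$, and choosing $\lambda\in(0,1]$ with $\lambda u\in\mathfrak{N}$ gives
\begin{equation*}
d_{\mathfrak{N}}\le J[\lambda u]\le J[u]\le\liminf J[u_n]=d_{\mathfrak{N}}.
\end{equation*}
Therefore $\lambda=1$, $u\in\mathfrak{N}$, and $\mathcal{L}[u]=d_{\mathfrak{N}}$. This step is the main obstacle, especially the nonlocal Brezis--Lieb splitting and ruling out dichotomy; the profile decomposition mentioned in the paper handles it systematically.

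\textbf{Step 3: Euler--Lagrange equation.} The minimizer satisfies $\mathcal{L}'[u]=\mu\mathcal{N}'[u]$. Pairing with $u$ gives $0=\mathcal{N}[u]=\mu\langle\mathcal{N}'[u],u\rangle$. Here
\begin{equation*}
\langle\mathcal{N}'[u],u\rangle=2\mathcal{K}[u]-(p+1)\int|u|^{p+1}-4\int(|x|^{-2}*|u|^2)|u|^2<0,
\end{equation*}
using $\mathcal{N}[u]=0$ and $p>1$. So $\mu=0$ and $u$ solves \eqref{elliptic}. Since every nontrivial solution lies in $\mathfrak{N}$, $u$ minimizes the action among all solutions, i.e.\ it is a ground state.
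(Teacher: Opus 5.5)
Your proposal is correct. Steps 1 and 3 are the paper's own arguments: Proposition \ref{d>0}, then the Lagrange multiplier is killed by $\langle\mathcal{N}'[u],u\rangle<0$, and $u$ is compared with an arbitrary nontrivial solution. Your auxiliary functional $J=\mathcal{L}-\theta\mathcal{N}$ with $\theta\in\{\frac14,\frac1{p+1}\}$ is exactly the paper's $\mathcal{L}_1$/$\mathcal{L}_2$ device. Your aside about $p>3$ is backwards: $p\ge3$ occurs only for $D=3$, and $p<3$ holds for every $D\ge4$. Since you fall back on whichever split has nonnegative coefficients, nothing breaks.

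The real difference is the compactness step. The paper argues as follows:
\begin{itemize}
\item It runs the minimizing sequence through the full profile decomposition of Proposition \ref{prop 5.2}, including the Hartree orthogonality \eqref{2.81}.
\item It shows $\mathcal{N}[U^j]\ge0$ for every profile, by scaling a profile with $\mathcal{N}<0$ down onto $\mathfrak{N}$.
\item It deduces $\mathcal{N}[U^j]=0$ from $\sum_j\mathcal{N}[U^j]+\mathcal{N}[u_n^J]=o(1)$, using that the remainder is small in $L^{p+1}$ and in the Hartree term.
\item Additivity of $\mathcal{L}_k$ then leaves room for exactly one nonzero profile, which attains $d_{\mathfrak{N}}$.
\end{itemize}

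You instead extract a single nontrivial weak limit (Lions' lemma plus translation) and split $u_n=u+r_n$ by the local and nonlocal Brezis--Lieb lemmas. Because $r_n$ need not be small, you exclude $\mathcal{N}[u]>0$ by rescaling the remainder onto $\mathfrak{N}$, which gives $d_{\mathfrak{N}}\le d_{\mathfrak{N}}-J[u]+o(1)$. The case $\mathcal{N}[u]\le0$ is then closed by the same scaling-down argument the paper applies profile by profile.

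Your version is more elementary and self-contained: it needs only a two-term splitting and no orthogonality of infinitely many translation sequences. It also makes explicit why a nonzero profile exists, via $\mathcal{K}\ge\delta$ on $\mathfrak{N}$, which the paper does not spell out. The paper's version, in turn, reuses the decomposition it needs anyway for Proposition \ref{prop 5.3}.
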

\begin{rem}
	In subsection \ref{strong instability}, in the spirit of \cite{Zhang2005}, we establish the strong instability of standing waves for $1+\frac{4}{D}\le p<1+\frac{4}{D-2}$ and $\omega>0$, this result in Proposition \ref{noempty} is a crucial preliminary  in the proof of Theorem \ref{Theorem 5.4}.  
\end{rem}
Next, we introduce functional
\begin{equation}
	\mathcal{V}[u]=\int_{\mathbb{R}^{D}} |\nabla u|^{2}dx-\frac{D(p-1)}{2(p+1)}\int_{\mathbb{R}^{D}} |u|^{p+1}dx-\frac{1}{2}\int_{\mathbb{R}^{D}} (|x|^{-2}*|u|^{2})|u|^{2}dx. 
\end{equation}
Consider another manifold 
\begin{equation}\label{vpm}
	\mathfrak{M}:=\{u\in\mathbb{R}^{D},\ \mathcal{N}[u]<0,\ \mathcal{V}[u]=0\}  
\end{equation}
and variational problem on it defined by 
\begin{equation}
	d_{\mathfrak{M}}=\inf \{\mathcal{L}[u],\ u\in\mathfrak{M}\}.
\end{equation}
Actually, for all $\omega>0$, there holds a relationship naturally between $d_{\mathfrak{M}}$ and $d_{\mathfrak{N}}$, we state it as following proposition.
\begin{prop}\label{compare}
	Let $D\ge 3$ and $\omega>0$. If $1+\frac{4}{D}\le p<1+\frac{4}{D-2}$, then $d_{\mathfrak{N}}\le d_{\mathfrak{M}}$. 
\end{prop}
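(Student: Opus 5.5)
The plan is to take an arbitrary $u\in\mathfrak{M}$ and build from it a point of $\mathfrak{N}$ whose Lagrangian value does not exceed $\mathcal{L}[u]$. That gives $d_{\mathfrak{N}}\le\mathcal{L}[u]$, and taking the infimum over $u\in\mathfrak{M}$ yields $d_{\mathfrak{N}}\le d_{\mathfrak{M}}$. We interpret $\mathfrak{M}$ as a subset of $H^1(\mathbb{R}^D)\setminus\{0\}$. For $u\in\mathfrak{M}$ we write
\[
A=\int_{\mathbb{R}^D}|\nabla u|^2dx,\quad B=\int_{\mathbb{R}^D}|u|^2dx,\quad C=\int_{\mathbb{R}^D}|u|^{p+1}dx,\quad H=\int_{\mathbb{R}^D}(|x|^{-2}*|u|^2)|u|^2dx,
\]
so that $\mathcal{V}[u]=0$ reads $A=\frac{D(p-1)}{2(p+1)}C+\frac12 H$, and $\mathcal{N}[u]<0$ reads $A+\omega B<C+H$.

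\emph{Step 1: scaling into $\mathfrak{N}$.} We use the amplitude scaling $u_\lambda=\lambda u$ with $\lambda>0$, for which
\[
\mathcal{N}[u_\lambda]=\lambda^2(A+\omega B)-\lambda^{p+1}C-\lambda^4H.
\]
For small $\lambda$ this is positive, since $A+\omega B>0$. At $\lambda=1$ it is negative by assumption. By continuity there is $\lambda_0\in(0,1)$ with $\mathcal{N}[u_{\lambda_0}]=0$, so $u_{\lambda_0}\in\mathfrak{N}$.

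\emph{Step 2: comparing $\mathcal{L}$.} The key identity is
\[
\mathcal{L}[v]=\tfrac12\mathcal{N}[v]+\big(\tfrac12-\tfrac1{p+1}\big)\int_{\mathbb{R}^D}|v|^{p+1}dx+\tfrac14\int_{\mathbb{R}^D}(|x|^{-2}*|v|^2)|v|^2dx.
\]
Applying it at $v=u_{\lambda_0}$, where the Nehari term vanishes, gives
\[
\mathcal{L}[u_{\lambda_0}]=\tfrac{p-1}{2(p+1)}\lambda_0^{p+1}C+\tfrac14\lambda_0^4H<\tfrac{p-1}{2(p+1)}C+\tfrac14 H,
\]
using $\lambda_0<1$ and $C,H\ge0$, not both zero. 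It then remains to show that the right-hand side is at most $\mathcal{L}[u]$. We compute
\[
\mathcal{L}[u]=\tfrac12(A+\omega B)-\tfrac1{p+1}C-\tfrac14H .
\]
Substituting $A$ from $\mathcal{V}[u]=0$ gives
\[
\mathcal{L}[u]=\tfrac{\omega}{2}B+\Big(\tfrac{D(p-1)}{4(p+1)}-\tfrac1{p+1}\Big)C,
\]
because the $H$-terms cancel: $\frac14H-\frac14H=0$.

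\emph{Step 3: where the difficulty lies.} The direct comparison in Step 2 is not immediate, because $\mathcal{L}[u]$ no longer contains $H$. I expect this to be the main obstacle. I would therefore avoid direct comparison and use instead the monotonicity of $f(\lambda)=\mathcal{L}[\lambda u]$ on $[\lambda_0,1]$. Since
\[
f'(\lambda)=\lambda^{-1}\mathcal{N}[\lambda u],
\]
$f$ decreases exactly where $\mathcal{N}[\lambda u]<0$. Moreover $\lambda^{-2}\mathcal{N}[\lambda u]=(A+\omega B)-\lambda^{p-1}C-\lambda^2H$ is strictly decreasing in $\lambda$. Hence $\mathcal{N}[\lambda u]<0$ for all $\lambda>\lambda_0$, and $\lambda_0$ is the unique zero. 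It follows that $f(1)\le f(\lambda_0)$, which gives $\mathcal{L}[u]\le\mathcal{L}[u_{\lambda_0}]$ — the wrong direction for plain amplitude scaling.

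So I would switch to a two-parameter family $u^{\mu,\lambda}(x)=\mu\lambda^{D/2}u(\lambda x)$, which combines an $L^2$-preserving dilation with an amplitude factor. The aim is to exploit $\mathcal{V}[u]=0$, which says that $u$ is critical for $\lambda\mapsto\mathcal{L}[\lambda^{D/2}u(\lambda\cdot)]$. For $p\ge1+\frac4D$ the function $\lambda\mapsto\mathcal{L}[\lambda^{D/2}u(\lambda\cdot)]$ has its maximum at $\lambda=1$, since its $\lambda^2$-terms in $A$ and $H$ and its $\lambda^{D(p-1)/2}$-term in $C$ have the right exponents. The procedure is:
\begin{enumerate}
\item Decrease $\lambda$ from $1$ along the dilation curve. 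This lowers $\mathcal{L}$ and, because the dilation preserves the mass, moves $\mathcal{N}$ upward from negative values.
\item Reach a point where $\mathcal{N}=0$; this is possible because as $\lambda\to0$ the term $\omega B$ dominates and $\mathcal{N}>0$.
\end{enumerate}
The resulting function lies in $\mathfrak{N}$ and has $\mathcal{L}$-value at most $\mathcal{L}[u]$. Verifying the monotonicity of $\mathcal{L}$ along the dilation for $\lambda\in(0,1)$ uses $D(p-1)/2\ge2$ together with $\mathcal{V}[u]=0$, and this is the step I would check carefully.
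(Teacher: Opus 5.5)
Your final route is correct and essentially the paper's proof (your Steps 1--2 and the amplitude scaling are dead ends you rightly discard). It uses the mass-preserving dilation $u_\lambda=\lambda^{D/2}u(\lambda x)$ with $\frac{d}{d\lambda}\mathcal{L}[u_\lambda]=\lambda^{-1}\mathcal{V}[u_\lambda]=\lambda^{-1}\frac{D(p-1)}{2(p+1)}\big(\lambda^{2}-\lambda^{D(p-1)/2}\big)\int|u|^{p+1}dx\ge 0$ on $(0,1]$, and the intermediate value theorem then gives $\lambda^*\in(0,1)$ with $\mathcal{N}[u_{\lambda^*}]=0$. You do not need $\mathcal{N}[u_\lambda]$ to increase monotonically as $\lambda$ decreases; the sign change between $\lambda=1$ and $\lambda\to0^+$ is enough.
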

\begin{rem}
	 Propositions \ref{noempty} and \ref{compare} make detail discussions on variational problems \eqref{variational problem} and \eqref{vpm}, from the proof of these Theorems, the structures of functionals are of great impotence in understanding the variational characterizations, and our method is on the basis of profile decomposition theory, which will be introduced in next section. 
\end{rem}

\subsection{Main results}
This subsection is devoted to stating our main results. First of all, we define the cross constrained invariant manifolds as follows 
\begin{equation}
	\mathfrak{K}:=\{\psi\in H^{1}(\mathbb{R}^{D}), \mathcal{L}[\psi]<d_{\mathfrak{N}}, \mathcal{N}[\psi]<0, \mathcal{V}[\psi]<0\}, 
\end{equation}
\begin{equation}
	\mathfrak{K^{+}}:=\{\psi\in H^{1}(\mathbb{R}^{D}), \mathcal{L}[\psi]<d_{\mathfrak{N}}, \mathcal{N}[\psi]<0, \mathcal{V}[\psi]>0\}, 
\end{equation}
\begin{equation}
	\mathfrak{R^{+}}:=\{\psi\in H^{1}(\mathbb{R}^{D}), \mathcal{L}[\psi]<d_{\mathfrak{N}}, \mathcal{N}[\psi]>0\}. 
\end{equation}
Then, we can obtain the following sharp criteria of blowup for the Cauchy problem (\ref{Hartree equation})-(\ref{initial data}), as follows.
\begin{thm}\label{Theorem 4.1}Let $D\ge 3$, $1+\frac{4}{D}\le p<1+\frac{4}{D-2}$, and  $\omega>0$. 
	
	(i) If $\psi_{0}\in\mathfrak{K}$, then the corresponding solution of the Cauchy problem (\ref{Hartree equation})-(\ref{initial data}) blows up in a finite time. 
	
	(ii) If $\psi_{0}\in\mathfrak{K^{+}}\cup\mathfrak{R^{+}}$, then the corresponding solution of the Cauchy problem (\ref{Hartree equation})-(\ref{initial data}) exists globally for all time. 
\end{thm}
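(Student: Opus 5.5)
The argument follows Zhang's cross constrained scheme. I first prove that each of $\mathfrak{K}$, $\mathfrak{K}^{+}$ and $\mathfrak{R}^{+}$ is invariant under the flow of (\ref{Hartree equation})--(\ref{initial data}). I then apply the virial identity on $\mathfrak{K}$ and an a priori $H^1$ bound on $\mathfrak{K}^{+}\cup\mathfrak{R}^{+}$.

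\textbf{Invariance.} Mass and energy are conserved, and $\mathcal{L}[\psi]=\mathscr{E}[\psi]+\frac{\omega}{2}\mathscr{M}[\psi]$. Hence $\mathcal{L}[\psi(t)]=\mathcal{L}[\psi_0]<d_{\mathfrak{N}}$ for all $t$.

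Suppose $\psi_0\in\mathfrak{K}\cup\mathfrak{K}^{+}$ and $\mathcal{N}[\psi(t)]<0$ fails at some time. By continuity of $t\mapsto\psi(t)\in H^1$ there is a first $t_0$ with $\mathcal{N}[\psi(t_0)]=0$. We need $\psi(t_0)\neq0$. Since $\mathcal{N}<0$ forces
\[
\mathcal{K}[u]<\int|u|^{p+1}+\int(|x|^{-2}*|u|^2)|u|^2\lesssim \|u\|_{H^1}^{p+1}+\|u\|_{H^1}^4,
\]
the set $\{\mathcal{N}<0\}$ stays a fixed positive distance from $0$ in $H^1$. So $\psi(t_0)\in\mathfrak{N}$, which gives $\mathcal{L}[\psi(t_0)]\ge d_{\mathfrak{N}}$, a contradiction. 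Therefore $\mathcal{N}[\psi(t)]<0$ for all $t$.

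For the sign of $\mathcal{V}$: if $\mathcal{V}[\psi(t_1)]=0$ at some time while $\mathcal{N}[\psi(t_1)]<0$, then $\psi(t_1)\in\mathfrak{M}$. Proposition \ref{compare} gives $\mathcal{L}[\psi(t_1)]\ge d_{\mathfrak{M}}\ge d_{\mathfrak{N}}$, again a contradiction. This is exactly where Proposition \ref{compare} enters; it replaces the frequency restriction of \cite{Zhang2005}.

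For $\mathfrak{R}^{+}$, if $\mathcal{N}$ reached $0$ at a first time, then either $\psi(t_0)=0$, which is excluded by mass conservation since $\psi_0\neq0$, or $\psi(t_0)\in\mathfrak{N}$, which contradicts $\mathcal{L}<d_{\mathfrak{N}}$.

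\textbf{(i) Blowup on $\mathfrak{K}$.} I use the virial identity for $J(t)=\int|x|^2|\psi|^2dx$, which gives $J''(t)=8\mathcal{V}[\psi(t)]$; I take this as standard for the Hartree equation with power perturbation. I need $\mathcal{V}[\psi(t)]\le-\delta<0$ uniformly in $t$, so that $J$ becomes negative in finite time.

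Using $p\ge1+\frac4D$, so that $\frac{D(p-1)}{2}\ge2$, I compare $\mathcal{L}$ with $\mathcal{V}$ along the scaling $u_\lambda=\lambda^{D/2}u(\lambda x)$. Under this scaling the mass is fixed and $\frac{d}{d\lambda}\mathcal{L}[u_\lambda]\big|_{\lambda=1}=\mathcal{V}[u]$. For $\mathcal{V}[u]<0$ there is $\lambda^*\in(0,1)$ with $\mathcal{V}[u_{\lambda^*}]=0$. Along the way $\mathcal{N}[u_\lambda]$ stays negative, or else $u_\lambda$ crosses $\mathfrak{N}$ first; this alternative needs care. Using the inequality
\[
\mathcal{L}[u]-\mathcal{L}[u_{\lambda^*}]\ge\tfrac12\bigl(1-\lambda^{*2}\bigr)\mathcal{V}[u]\cdot c,
\]
which follows from concavity in $\lambda^2$ of the nonkinetic terms when $\frac{D(p-1)}{2}\ge2$, I obtain $\mathcal{V}[u]\le C\,(\mathcal{L}[u]-d_{\mathfrak{N}})<0$. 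More simply, I aim for $\mathcal{V}[u]\le2(\mathcal{L}[u]-d_{\mathfrak{N}})$. Since $\mathcal{L}[\psi(t)]$ is constant, this yields the uniform negative bound. For data not in $\Sigma$ (no finite variance), I would note that the paper may assume $|x|\psi_0\in L^2$; otherwise a localized virial argument is needed.

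\textbf{(ii) Global existence on $\mathfrak{K}^{+}\cup\mathfrak{R}^{+}$.} On $\mathfrak{K}^{+}$, the condition $\mathcal{V}>0$ gives
\[
\mathcal{L}=\tfrac12\mathcal{K}-\tfrac14\!\int(\cdot)-\tfrac1{p+1}\!\int|u|^{p+1}\ \ge\ \tfrac12\mathcal{K}-\tfrac12\|\nabla u\|_2^2+(\text{nonnegative})\ \ge\ c\,\|\nabla u\|_2^2 ,
\]
where I use $\frac{D(p-1)}{2}\ge2$ to control the power term by the Hartree term and the kinetic term. In the endpoint case $\frac{D(p-1)}{2}=2$ the kinetic coefficient may degenerate, and there I combine with $\mathcal{N}$. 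On $\mathfrak{R}^{+}$, $\mathcal{N}>0$ gives $\mathcal{L}\ge\bigl(\frac12-\frac14\bigr)\mathcal{K}$ when $p\ge3$; otherwise I combine with $\mathcal{V}$ as well. In both cases $\|\psi(t)\|_{H^1}$ is bounded by $d_{\mathfrak{N}}$ and the mass, and the blowup alternative yields $T=\infty$.

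\textbf{Main obstacle.} I expect the quantitative bound $\mathcal{V}\le C(\mathcal{L}-d_{\mathfrak{N}})$ on $\mathfrak{K}$ to be hardest. It must pass through $d_{\mathfrak{M}}$ via Proposition \ref{compare}, while ensuring the scaled path keeps $\mathcal{N}<0$ until $\mathcal{V}$ vanishes.
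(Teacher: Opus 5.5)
The gap is in the a priori bound on $\mathfrak{K}^{+}$ in part (ii). From $\mathcal{V}[u]>0$ you correctly get $\mathcal{L}[u]>\frac12\mathcal{K}[u]-\frac12\|\nabla u\|_2^2+\frac{D(p-1)-4}{4(p+1)}\int|u|^{p+1}dx$. However, $\frac12\mathcal{K}[u]-\frac12\|\nabla u\|_2^2=\frac{\omega}{2}\|u\|_2^2$, so this is only the identity
\[
\mathcal{L}[u]-\tfrac12\mathcal{V}[u]=\tfrac{\omega}{2}\|u\|_2^2+\tfrac{D(p-1)-4}{4(p+1)}\int|u|^{p+1}dx .
\]
It contains no gradient term for any $p$, not just at the endpoint, so the step ``$\ge c\|\nabla u\|_2^2$'' does not follow.

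The reason is structural. The Hartree term scales exactly like the kinetic energy, so every combination $\mathcal{L}-\theta\mathcal{V}$ sees $\|\nabla u\|_2^2$ only through $\|\nabla u\|_2^2-\frac12\int(|x|^{-2}*|u|^{2})|u|^{2}dx$. The $L^2$ bound you do obtain was already given by mass conservation. What is missing is a separate bound on the Hartree term.

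For $p>1+\frac4D$ this is available. The identity bounds $\int|u|^{p+1}dx$. Hardy--Littlewood--Sobolev gives $\int(|x|^{-2}*|u|^{2})|u|^{2}dx\lesssim\|u\|_{2D/(D-1)}^{4}$, and H\"older interpolation between $L^2$ and $L^{p+1}$ (note $2<\frac{2D}{D-1}<p+1$) then bounds the Hartree term. Consequently $\frac12\|\nabla u\|_2^2=\mathcal{L}[u]-\frac{\omega}{2}\|u\|_2^2+\frac{1}{p+1}\int|u|^{p+1}dx+\frac14\int(|x|^{-2}*|u|^{2})|u|^{2}dx$ is bounded.

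For $p=1+\frac4D$ even $\int|u|^{p+1}dx$ is uncontrolled: here $\mathcal{V}=2\mathscr{E}$, and the identity only says $\frac{\omega}{2}\|u\|_2^2<d_{\mathfrak{N}}$. ``Combining with $\mathcal{N}$'' cannot help, since $\mathcal{N}<0$ is a lower bound on the nonlinear terms. What you need is that $\frac{\omega}{2}\|u\|_2^2<d_{\mathfrak{N}}$ is a subcritical-mass condition:
\begin{itemize}
\item If $v\neq0$ and $\mathcal{V}[v]\le0$, then $\lambda\mapsto\mathcal{N}[\lambda^{D/2}v(\lambda x)]$ has a zero $\lambda_0$, and $d_{\mathfrak{N}}\le\frac{\lambda_0^2}{2}\mathcal{V}[v]+\frac{\omega}{2}\|v\|_2^2\le\frac{\omega}{2}\|v\|_2^2$.
\item Apply this to $v=\beta u$ with $\beta>1$ and $\frac{\omega}{2}\beta^2\|u\|_2^2<d_{\mathfrak{N}}$. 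This gives $\mathscr{E}[u]\ge\frac12(1-\beta^{1-p})\|\nabla u\|_2^2$, and energy conservation closes the bound.
\end{itemize}

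There are two smaller points. First, in (i) the claim that $\mathcal{V}[u]<0$ produces $\lambda^*\in(0,1)$ with $\mathcal{V}[u_{\lambda^*}]=0$ is false. For $p=1+\frac4D$ one has $\mathcal{V}[u_\lambda]=\lambda^2\mathcal{V}[u]<0$ for all $\lambda$. For $p>1+\frac4D$ it fails whenever $\|\nabla u\|_2^2\le\frac12\int(|x|^{-2}*|u|^{2})|u|^{2}dx$.

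Your argument survives anyway. Since $\mathcal{N}[u_\lambda]\to\omega\|u\|_2^2>0$ as $\lambda\to0^+$, a crossing $\mu\in(0,1)$ of $\mathfrak{N}$ always exists. Your concavity bound holds with $c=1$ at every $\mu\in(0,1)$: $\mathcal{L}[u]-\mathcal{L}[u_\mu]\ge\frac12(1-\mu^2)\mathcal{V}[u]\ge\frac12\mathcal{V}[u]$. This gives $\mathcal{V}\le2(\mathcal{L}-d_{\mathfrak{N}})$ without even using $d_{\mathfrak{M}}$ at this step.

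The paper avoids the issue by using $\psi_\lambda=\lambda^{D/(p+1)}\psi(\lambda x)$ with $\lambda>1$, for which $\mathcal{V}[\psi_\lambda]\to+\infty$ guarantees a zero. It then uses the exact relation $\mathcal{L}[\psi]-\mathcal{L}[\psi_\lambda]-\frac12(\mathcal{V}[\psi]-\mathcal{V}[\psi_\lambda])=\frac{\omega}{2}(1-\lambda^{\frac{2D}{p+1}-D})\|\psi\|_2^2\ge0$.

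Second, on $\mathfrak{R}^{+}$ with $p<3$, ``combine with $\mathcal{V}$'' is unavailable, because there is no sign information on $\mathcal{V}$ there. Instead, $\mathcal{L}-\frac{1}{p+1}\mathcal{N}=(\frac12-\frac{1}{p+1})\mathcal{K}+(\frac{1}{p+1}-\frac14)\int(|x|^{-2}*|u|^{2})|u|^{2}dx$ suffices, as in the paper.
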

\begin{rem}\label{1.16}
	By some fundamental computations, one can see that following decomposition holds 
	\begin{equation}\label{4.03}
		\{\psi\in H^{1}(\mathbb{R}^{D}),\  \mathcal{L}[\psi]<d_{\mathfrak{N}}\}=\mathfrak{K}^{+}\cup\mathcal{R}^{+}\cup\mathfrak{K}, 
	\end{equation}
	then Theorem \ref{Theorem 4.1} reveals a sharp criteria for global existence and blow-up solutions.
\end{rem}
\begin{rem}
	There are some results for blow-up solutions of Cauchy problem (\ref{Hartree equation})-(\ref{initial data}). Fang et al. \cite{FH2011} (see also \cite{Cazenave2003}) proved the existence of blow-up solutions under assumptions $\mathscr{E}[\psi_0]<0$ and $1+\frac{4}{D}\le p\le 1+\frac{4}{D-2}$ in energy space $\Sigma=\{u\in H^{1}(\mathbb{R}^{D}), |x|u\in L^{2}(\mathbb{R}^{D})\}$. For $p=1+\frac{4}{D}$, Tian and Zhu \cite{TianZhu2022} derived the blow-up result for arbitrary supcritical mass by scaling invariance. When $p>1+\frac{4}{D}$, Leng et al. \cite{Leng2017} firstly give the sharp threshold for global existence and blow-up solutions by analyzing the structure of energy functional, and the main argument comes from \cite{ZhangZhu2011}, and has been widely employed to determine the sharp threshold of nonlinear Schr\"odinger equations (see for instance \cite{Feng2018,S2021,SA2022,TYZZ2021}). 
\end{rem}
\begin{rem}
	Besides, under the loss of scaling invariance and the interaction of nonlinear terms, we investigate the sharp criteria of Cauchy problem (\ref{Hartree equation})-(\ref{initial data}) for $1+\frac{4}{D}\le p<1+\frac{4}{D-2}$ by an alternative cross constrained variational method, which is different with previous results, and our results make a fundamental improvement in studying the strong instability of standing waves for all positive frequencies. Actually, for Eq.~(\ref{Hartree equation}), this method can give a exact comparison of two cross constrained variational problems, then for all $\omega>0$, we prove that  the standing waves are strongly unstable, thereby resolve the open problem proposed in \cite{Zhang2005}. 
\end{rem}
Then, we consider the sharp exponent of existence for stable  standing waves, which are not shown in previous studies. To start with, we give the definition of orbital stability. It follows from Cazenave and Lions' arguments \cite{CL1982} that the orbital stability corresponds with following variational problem 
\begin{equation}\label{2.5}
	d_{m}:=\inf_{u\in H^{1}(\mathbb{R}^{D}),\ \|u\|_{L^{2}(\mathbb{R}^{D})}^{2}=m}\mathscr{E}[u].  
\end{equation}
If the infimum of \eqref{2.5} can be attained, then we can define the minimizer set as follows
\begin{equation}
	\Omega:=\{u\in H^{1}(\mathbb{R}^{D}), \mathscr{E}[u]=d_{m}, \|u\|_{L^{2}(\mathbb{R}^{D})}^{2}=m\}. 
\end{equation}
\begin{defn}
	For any $\varepsilon>0$, if there exists $\delta>0$ such that 
	$
		\inf\limits_{u\in\Omega}\|\psi_{0}-u\|_{H^{1}(\mathbb{R}^{D})}<\delta$, 
	where $\psi_{0}$ is the initial datum, and the corresponding solution $\psi(t, x)$ for the Cauchy problem (\ref{Hartree equation})-(\ref{initial data}) with $\psi_{0}$ has following estimate
	\begin{equation*}
		\inf_{u\in\Omega}\|\psi(t, x)-u\|_{H^{1}(\mathbb{R}^{D})}<\varepsilon, \quad \text{for all }t>0, 
	\end{equation*}
	then the set $\Omega$ is called orbital stability. 
\end{defn}
By utilizing profile decomposition and scaling technique, we derive the orbital stability and strong instability of standing waves, as follows. 
\begin{thm}\label{Theorem 5.1}
	Let $D\ge 3$ and $1<p<1+\frac{4}{D}$. If  $\|\psi_{0}\|_{L^{2}(\mathbb{R}^{D})}^{2}<\|\nabla W\|_{L^{2}(\mathbb{R}^{D})}^{2}$, where $W$ is the ground state solution of (\ref{2.4}) and $\psi_{0}$ is the initial datum, then the standing waves of Eq. (\ref{Hartree equation}) is orbitally stable. 
\end{thm}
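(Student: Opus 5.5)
The plan is the Cazenave--Lions concentration-compactness argument for orbital stability, with the profile decomposition doing the compactness. Set $m=\|\psi_0\|_{L^2}^2$. Here $W$ is the ground state of \eqref{2.4}, which I take to be the Hartree ground state giving the sharp Gagliardo--Nirenberg-type inequality
\begin{equation*}
\int_{\mathbb{R}^{D}}(|x|^{-2}*|u|^{2})|u|^{2}dx\le \frac{2}{\|\nabla W\|_{L^2}^{2}}\|u\|_{L^2}^{2}\|\nabla u\|_{L^2}^{2}.
\end{equation*}
The normalization should be exactly the one that makes $\|\nabla W\|_2^2$ the threshold in the statement.

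\textbf{Step 1: coercivity and finiteness.} For $\|u\|_2^2=m$ we get
\begin{equation*}
\mathscr{E}[u]\ge \frac12\Bigl(1-\frac{m}{\|\nabla W\|_2^2}\Bigr)\|\nabla u\|_2^2-C m^{\frac{p+1}{2}-\frac{D(p-1)}{4}}\|\nabla u\|_2^{\frac{D(p-1)}{2}}.
\end{equation*}
Since $1<p<1+\frac4D$, we have $\frac{D(p-1)}{2}<2$. Hence $d_m>-\infty$, and every minimizing sequence is bounded in $H^1(\mathbb{R}^{D})$.

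\textbf{Step 2: $d_m<0$.} Use the mass-preserving scaling $u_\lambda=\lambda^{D/2}u(\lambda x)$, under which
\begin{equation*}
\mathscr{E}[u_\lambda]=\lambda^2\Bigl(\tfrac12\|\nabla u\|_2^2-\tfrac14\!\int(|x|^{-2}*|u|^2)|u|^2\Bigr)-\frac{\lambda^{D(p-1)/2}}{p+1}\|u\|_{p+1}^{p+1}.
\end{equation*}
As $\lambda\to0$ the last term dominates because $D(p-1)/2<2$, so $d_m<0$.

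\textbf{Step 3: strict subadditivity.} For $0<\theta<m$ I want $d_m<d_\theta+d_{m-\theta}$. Using $u\mapsto\sqrt{t}\,u$ with $t>1$, both nonlinear terms scale superlinearly in $t$ ($t^2$ and $t^{(p+1)/2}$). This gives $d_{tm}\le t\,d_m$, and the inequality is strict once $d_m<0$, by the standard argument. Strict subadditivity follows.

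\textbf{Step 4: compactness of minimizing sequences.} Take a minimizing sequence $u_n$, bounded by Step 1, and apply the linear profile decomposition $u_n=\sum_{j\le J}V^j(\cdot-x_n^j)+r_n^J$ in $H^1(\mathbb{R}^{D})$. This comes with Pythagorean expansions of $\|u_n\|_2^2$ and $\|\nabla u_n\|_2^2$, and $\limsup\|r_n^J\|_{L^q}\to0$ for $2<q<\frac{2D}{D-2}$. Both nonlinear terms also decouple.
\begin{itemize}
\item The $L^{p+1}$ term decouples by orthogonality of the translations.
\item The Hartree term decouples by Hardy--Littlewood--Sobolev, which bounds it by $\|u\|_{L^{2D/(D-1)}}^4$, a subcritical norm, plus orthogonality.
\end{itemize}
Thus $\mathscr{E}[u_n]=\sum_j\mathscr{E}[V^j]+\mathscr{E}[r_n^J]+o(1)$. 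The remainder has asymptotically nonnegative energy, since its nonlinear parts vanish. Writing $m_j=\|V^j\|_2^2$, we get $d_m\ge\sum_j d_{m_j}$ with $\sum m_j\le m$. Because $\theta\mapsto d_\theta$ is negative and decreasing, Step 3 forces a single profile $V^1$ with $\|V^1\|_2^2=m$. Then $u_n(\cdot+x_n^1)\to V^1$ strongly in $L^2$, hence in $L^q$. Lower semicontinuity gives $\mathscr{E}[V^1]\le d_m$, so $V^1\in\Omega$. Equality of energies then forces $\|\nabla u_n\|_2\to\|\nabla V^1\|_2$, so the convergence is strong in $H^1$. I expect this step to be the main obstacle: ruling out dichotomy needs precise decoupling of the nonlocal term and the monotonicity of $d_\theta$.

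\textbf{Step 5: global existence and stability.} Global existence for $\psi_0$ near $\Omega$ follows from Step 1 together with conservation of mass and energy, since the solution stays bounded in $H^1$. Now argue by contradiction and suppose stability fails. Then there are $\psi_{0,n}$ with $\operatorname{dist}_{H^1}(\psi_{0,n},\Omega)\to0$ and times $t_n$ with $\operatorname{dist}(\psi_n(t_n),\Omega)\ge\varepsilon$. By conservation, $\|\psi_n(t_n)\|_2^2\to m$ and $\mathscr{E}[\psi_n(t_n)]\to d_m$. After rescaling to exact mass $m$, $\psi_n(t_n)$ is a minimizing sequence. By Step 4 it converges, up to translation and a subsequence, to an element of $\Omega$, which contradicts the choice of $t_n$.
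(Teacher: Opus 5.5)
Your proposal is correct and follows essentially the paper's route: coercivity below the mass threshold, $d_m<0$ via the $L^2$-invariant scaling, profile decomposition of a minimizing sequence with a mass-scaling argument that rules out vanishing and dichotomy, and then the Cazenave--Lions contradiction argument (Propositions 5.3 and 5.4). The only difference is packaging: you turn the scaling $u\mapsto\sqrt{t}\,u$ into strict subadditivity and strict monotonicity of $\theta\mapsto d_\theta$, whereas the paper rescales each profile $U^j$ directly to mass $m$ and sums, obtaining $d_m\ge d_m+C_0(\lambda_{j_0}^2-1)$.
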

\begin{thm}\label{Theorem 5.4}
	Let $D\ge 3$ and $\omega>0$. If $1+\frac{4}{D}\le p<1+\frac{4}{D-2}$,  then the standing waves of Eq. (\ref{Hartree equation}) are strong instability in the following sense: for the minimizer $u$ of the variational problem \eqref{variational problem} and  for any $\varepsilon>0$, there exists initial datum $\psi_{0}$ such that $\|u-\psi_{0}\|_{H^{1}(\mathbb{R}^{D})}<\varepsilon$, then the corresponding solution $\psi(t, x)$ of the Cauchy problem (\ref{Hartree equation})-(\ref{initial data}) blows up in a finite time. 
\end{thm}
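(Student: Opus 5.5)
The plan is to follow Zhang's cross constrained scheme. We perturb the ground state $u$ given by Proposition~\ref{noempty} by a scaling $u_\lambda$ that falls into the blow-up set $\mathfrak{K}$, and then we invoke Theorem~\ref{Theorem 4.1}(i).

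\textbf{Choice of scaling and basic identities.} We take the mass-preserving dilation
\[
u_\lambda(x)=\lambda^{D/2}u(\lambda x),\qquad \lambda>1.
\]
Then $\|u_\lambda-u\|_{H^1}\to 0$ as $\lambda\to1^+$. It remains to check that $\psi_0:=u_\lambda\in\mathfrak{K}$ for $\lambda>1$ close to $1$.

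Since $u$ solves \eqref{elliptic}, two identities hold:
\begin{itemize}
\item $\mathcal{N}[u]=0$.
\item $\mathcal{V}[u]=0$, by the Pohozaev identity. This is the virial functional evaluated at a solution; we would verify it by multiplying \eqref{elliptic} by $x\cdot\nabla \bar u$ and combining with $\mathcal{N}[u]=0$.
\end{itemize}

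Write $A=\|\nabla u\|_2^2$, $B=\int(|x|^{-2}*|u|^2)|u|^2$, $C=\int|u|^{p+1}$ and $\alpha=\frac{D(p-1)}{2}\ge 2$. Under the dilation, the three quantities scale as
\[
\|\nabla u_\lambda\|_2^2=\lambda^2A,\qquad B_\lambda=\lambda^2B,\qquad C_\lambda=\lambda^{\alpha}C,
\]
while the mass is unchanged.

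\textbf{Checking the three conditions of $\mathfrak{K}$.}
\begin{itemize}
\item \emph{Energy and $\mathcal{L}$.} Set
\[
f(\lambda)=\mathcal{L}[u_\lambda]=\tfrac12\lambda^2A+\tfrac{\omega}{2}\|u\|_2^2-\tfrac14\lambda^2B-\tfrac{1}{p+1}\lambda^\alpha C .
\]
Then $\lambda f'(\lambda)=\mathcal{V}[u_\lambda]$. Moreover $\mathcal{V}[u_\lambda]=\lambda^2\bigl(A-\tfrac12B\bigr)-\tfrac{\alpha}{p+1}\lambda^\alpha C$, and this vanishes at $\lambda=1$.
\begin{itemize}
\item If $\alpha>2$, then $\lambda^{-2}\mathcal{V}[u_\lambda]=(A-\tfrac12B)-\tfrac{\alpha}{p+1}\lambda^{\alpha-2}C$ is strictly decreasing. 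Hence $\mathcal{V}[u_\lambda]<0$ for $\lambda>1$, and so $f(\lambda)<f(1)=d_{\mathfrak{N}}$.
\item In the critical case $\alpha=2$, $\mathcal{V}[u_\lambda]=\lambda^2\mathcal{V}[u]=0$. So the pure dilation fails, and we instead use $\psi_0=\mu u_\lambda$ (or simply $\mu u$) with $\mu>1$. This multiplies $B$ by $\mu^4$ and $C$ by $\mu^{p+1}$ against $\mu^2$ for the kinetic term. Then
\[
\mathcal{V}[\mu u]=\mu^2A-\mu^{p+1}\tfrac{\alpha}{p+1}C-\tfrac12\mu^4B<\mu^2\mathcal{V}[u]=0 ,
\]
and similarly $\mathcal{N}[\mu u]<\mu^2\mathcal{N}[u]=0$.
\end{itemize}
For $\mathcal{L}$, the map $\mu\mapsto \mathcal{L}[\mu u]$ has derivative $\mu^{-1}\mathcal{N}[\mu u]$, which is negative for $\mu>1$. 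Hence $\mathcal{L}[\mu u]<d_{\mathfrak N}$. In fact this $\mu$-scaling works uniformly for every $\alpha\ge 2$, which is the simplest uniform choice: the inequality $\mathcal{V}[\mu u]<\mu^2\mathcal{V}[u]$ only needs $p+1>2$ and $4>2$.
\item \emph{$\mathcal{N}$.} In the $\mu$-scaling, $\mathcal{N}[\mu u]<0$ was shown above.
\item \emph{$\mathcal{V}$.} In the $\mu$-scaling, $\mathcal{V}[\mu u]<0$ was shown above.
\end{itemize}

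\textbf{Conclusion.} We take $\psi_0=\mu u$ with $\mu>1$ close to $1$, so that $\|\psi_0-u\|_{H^1}=(\mu-1)\|u\|_{H^1}<\varepsilon$. By the above, $\psi_0\in\mathfrak{K}$, and Theorem~\ref{Theorem 4.1}(i) gives finite-time blowup.

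\textbf{Main obstacle.} The only delicate point is the identity $\mathcal{V}[u]=0$ for the minimizer. This requires the Pohozaev identity for \eqref{elliptic}. Its derivation needs enough regularity and decay of $u$ to justify multiplying by $x\cdot\nabla\bar u$, and it uses the homogeneity of degree $-2$ of the Hartree kernel, which produces the factor $\tfrac12$ in front of $B$.

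An alternative route avoids Pohozaev: since $u$ minimizes $\mathcal{L}$ on $\mathfrak N$, the derivative of $\mathcal{L}$ along the Nehari-projected dilation curve vanishes at $\lambda=1$. We would use whichever of the two is cleaner, likely the Pohozaev identity justified via standard elliptic regularity, $u\in H^2$, and a cutoff argument.
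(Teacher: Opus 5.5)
Your argument is correct and is essentially the paper's proof: you take $\psi_0=\mu u$ with $\mu>1$ close to $1$, use $\frac{d}{d\mu}\mathcal{L}[\mu u]=\mu^{-1}\mathcal{N}[\mu u]$ together with $\mathcal{N}[\mu u]<\mu^{2}\mathcal{N}[u]=0$ to get $\mathcal{L}[\mu u]<d_{\mathfrak{N}}$, and conclude by Theorem~\ref{Theorem 4.1}(i). Your explicit check $\mathcal{V}[\mu u]<\mu^{2}\mathcal{V}[u]=0$, via the Pohozaev identity of Proposition~\ref{prop 3.3}, verifies a condition of $\mathfrak{K}$ that the paper's proof asserts without checking, and your preliminary dilation discussion is not needed.
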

\begin{rem}
	For all $\omega>0$, Wang and Zhang \cite{WZ2025} also obtained the strong instability of standing waves for Eq. \eqref{Hartree equation}, the main method is standard variational arguments, but they only consider the case $D=3$ and $3\le p<5$, this result is contained and extended to more general case in Theorem \ref{Theorem 5.4}. 
	\end{rem}
\begin{rem}
It follows from Theorems \ref{Theorem 5.1} and \ref{Theorem 5.4} that the critical exponent $p = 1 + \frac{4}{D}$ distinguishes between stable and strongly unstable standing waves, thus we obtain the sharp results for stability. 
\end{rem}

\subsection{Outline of this paper}
The rest of this paper is organized as follows. In Section \ref{Preliminaries}, two kinds of sharp Gagliardo-Nirenberg inequalities and profile decomposition theory are introduced. In Section \ref{Variarional Characterization}, the variational characterizations of ground states are investigated. The sharp criteria of blowup is studied in Section \ref{section4}, and Theorem \ref{Theorem 4.1} is proved by cross-constrained variational method. Section \ref{section5} reveals the sharp criteria for stability of standing waves, and Theorems \ref{Theorem 5.1} and \ref{Theorem 5.4} are derived.
For convenience, the following notations are utilized in the rest of this paper: $\|\cdot\|_{p}:=\|\cdot\|_{L^{p}(\mathbb{R}^{D})}$ and  $\int\cdot dx:=\int_{\mathbb{R}^{D}}\cdot dx$. 

\section{Preliminaries}\label{Preliminaries}
In this section, we will recall some crucial estimates and variational tools, which have been widely used in studying the nonlinear Schr\"odinger equations. Firstly, let's consider the best constant of Gagliardo-Nirenberg (GN) inequality, which plays a pivotal role in understanding the dynamical behaviors of critical nonlinear Schr\"odinger equations, including sharp mass threshold \cite{Zhu2016}, orbital stability of standing waves \cite{ZhangZhuJDDE}. Now, we introduce following two Lemmas regarding with sharp GN inequality. 
\begin{lem}(\cite{Cazenave2003,Weinstein1983})
	Let $D\ge 3$ and $0<q<\frac{2}{D-2}$, there holds following estimate 
	\begin{equation}\label{GN}
		\|v\|_{2q+2}^{2q+2}\le C_{\text{opt}}\|\nabla v\|_{2}^{q D}\|v\|_{2}^{2+q(2-D)}, 
	\end{equation}
	where $C_{\text{opt}}=\frac{q+1}{\|R\|_{2}^{2q}}$, and $R$ solves the following elliptic equation
	\begin{equation}\label{Q}
		\frac{q N}{2}\Delta R-\Big(1+\frac{q}{2}(2-D)\Big)R+R^{2q+1}=0, \ \ R\in H^{1}(\mathbb{R}^{D}). 
	\end{equation}
\end{lem}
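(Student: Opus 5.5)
This is the classical Weinstein argument, so I would not reprove it from scratch but outline the standard route. The plan is to consider the Weinstein functional
\[
J[v]=\frac{\|\nabla v\|_{2}^{qD}\|v\|_{2}^{2+q(2-D)}}{\|v\|_{2q+2}^{2q+2}},\qquad v\in H^{1}(\mathbb{R}^{D})\setminus\{0\},
\]
and show that its infimum $\alpha=\inf J$ is positive and attained. Then $C_{\text{opt}}=1/\alpha$, and it remains to identify $\alpha$ via the minimizer $R$ solving \eqref{Q}.

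\textbf{Step 1: positivity and reduction.} The restriction $0<q<\frac{2}{D-2}$ guarantees $2<2q+2<\frac{2D}{D-2}$. So the non-sharp inequality \eqref{GN} holds by Sobolev embedding and interpolation, which gives $\alpha>0$.

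$J$ is invariant under the two-parameter scaling $v\mapsto \mu v(\lambda x)$. I would therefore take a minimizing sequence normalized so that $\|v_n\|_2=\|\nabla v_n\|_2=1$. By replacing $v_n$ with its Schwarz symmetrization $v_n^*$ (P\'olya--Szeg\H{o} lowers the gradient and preserves the $L^{2}$ and $L^{2q+2}$ norms), I may assume the $v_n$ are radial, nonnegative and nonincreasing.

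\textbf{Step 2: compactness.} This is the main obstacle. The radial functions bounded in $H^{1}$ embed compactly into $L^{2q+2}$ by Strauss's lemma, since the exponent is strictly between $2$ and $2^{*}$. Hence, up to a subsequence, $v_n\rightharpoonup v^*$ weakly in $H^{1}$ and strongly in $L^{2q+2}$.

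Because $\|v_n\|_{2q+2}^{2q+2}=1/J[v_n]\to 1/\alpha>0$, the limit satisfies $v^*\neq0$. Weak lower semicontinuity gives $\|\nabla v^*\|_2\le1$ and $\|v^*\|_2\le1$, so $J[v^*]\le\alpha$. Therefore $v^*$ is a minimizer; in fact the norms converge, though this is not needed. (Alternatively one could invoke the profile decomposition the paper mentions, but Strauss compactness is simpler.)

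\textbf{Step 3: Euler--Lagrange equation and the constant.} Set $\frac{d}{d\varepsilon}J[v^*+\varepsilon h]|_{\varepsilon=0}=0$ for all $h$. This yields
\[
-qD\,\frac{\Delta v^*}{\|\nabla v^*\|_2^{2}}+\big(2+q(2-D)\big)\frac{v^*}{\|v^*\|_2^{2}}-(2q+2)\frac{(v^*)^{2q+1}}{\|v^*\|_{2q+2}^{2q+2}}=0 .
\]
With the normalization $\|v^*\|_2=\|\nabla v^*\|_2=1$ and $\|v^*\|_{2q+2}^{2q+2}=1/\alpha$, a rescaling $v^*=\mu R$ transforms this into \eqref{Q}; after dividing by $2$, the coefficients $\frac{qD}{2}$ and $1+\frac q2(2-D)$ appear. (The ``$N$'' in \eqref{Q} is the dimension $D$.)

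For $R$ itself, multiply \eqref{Q} by $R$ and by $x\cdot\nabla R$ and integrate. The resulting Pohozaev identities give
\[
\|\nabla R\|_2^2=\|R\|_2^2,\qquad \|R\|_{2q+2}^{2q+2}=(q+1)\|R\|_2^{2}.
\]
Since $J$ is scale invariant, $R$ is also a minimizer. Thus
\[
\alpha=J[R]=\frac{\|R\|_2^{2+2q}}{(q+1)\|R\|_2^{2}}=\frac{\|R\|_2^{2q}}{q+1},
\]
and hence $C_{\text{opt}}=\frac{q+1}{\|R\|_2^{2q}}$.

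The only delicate points are the compactness in Step 2 and tracking the scaling constants in Step 3 so that they land exactly on \eqref{Q}.
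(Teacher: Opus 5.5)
Your proposal is correct and is essentially the classical Weinstein proof: minimize the scale-invariant quotient $J$, symmetrize, use Strauss compactness, then read off the constant from the Euler--Lagrange equation and the Pohozaev identities. This is exactly the argument the paper relies on by citing Weinstein and Cazenave; it does not reprove the lemma. Two small clean-ups. First, contrary to your remark, the normalization $\|v^*\|_2=\|\nabla v^*\|_2=1$ is used in Step 3, but it follows at once: $J[v^*]\ge\alpha$ together with the positive exponents $qD$ and $2+q(2-D)$ forces both norms to equal $1$. Second, $R$ in the formula must be read as the minimal-mass (ground state) solution, since for a solution that does not minimize $J$ your Pohozaev computation gives $\|R\|_2^{2q}=(q+1)J[R]>(q+1)\alpha$.
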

For the Hardy nonlinear term, the last author of this present paper obtained the sharp generalized Gagliardo–Nirenberg (gGN) inequality in $H^{1}(\mathbb{R}^{D})$. 
\begin{lem}(\cite{Zhu2016})
	Let $D\ge 3$. One admits that 
	\begin{equation}\label{GGN}
		\|(|x|^{-2}*|v|^{2})|v|^{2}\|_{1}\le C_{\text{opt}}\|v\|_{2}^{2}\|\nabla v\|_{2}^{2}, 
	\end{equation}
	where $C_{\text{opt}}=\frac{2}{\|\nabla W\|_{2}^{2}}$ and $W$ satisfies the following elliptic equation 
	\begin{equation}\label{2.4}
		-\Delta W+W-(|x|^{-2}*|W|^{2})W=0, \ \ W\in H^{1}(\mathbb{R}^{D}). 
	\end{equation}
\end{lem}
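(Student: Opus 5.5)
This sharp generalized Gagliardo--Nirenberg inequality is quoted from \cite{Zhu2016}. If I had to prove it, I would use Weinstein's variational method \cite{Weinstein1983}. Set
\begin{equation*}
J[v]=\frac{\|v\|_{2}^{2}\|\nabla v\|_{2}^{2}}{\int(|x|^{-2}*|v|^{2})|v|^{2}dx},\qquad v\in H^{1}(\mathbb{R}^{D})\setminus\{0\}.
\end{equation*}
The inequality is then equivalent to $\alpha:=\inf J>0$, with sharp constant $C_{\text{opt}}=1/\alpha$. To see $\alpha>0$, apply Hardy--Littlewood--Sobolev with kernel $|x|^{-2}$, then interpolation and Sobolev/Hardy. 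This gives
\begin{equation*}
\int(|x|^{-2}*|v|^{2})|v|^{2}\lesssim \|v\|_{4D/(2D-2)}^{4}\lesssim\|v\|_2^2\|\nabla v\|_2^2 .
\end{equation*}
The numerator and denominator have the same homogeneity under $v\mapsto \mu v(\lambda x)$. Indeed, $\|v\|_2^2\|\nabla v\|_2^2$ scales like $\mu^4\lambda^{2-2D}$, and the Hartree term scales like $\mu^4\lambda^{-2D+2}$. So $J$ is invariant under this two-parameter family and under translations.

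\textbf{Step 1: the infimum is attained.} Take a minimizing sequence and normalize it, using the two scalings, to $\|v_n\|_2=\|\nabla v_n\|_2=1$. The denominator then tends to $1/\alpha>0$. I would invoke a concentration-compactness or profile decomposition argument for bounded $H^1$ sequences, of the kind the paper says it introduces in this section. The key ingredient is that the Hartree functional is asymptotically additive over the profiles, since cross terms between profiles with diverging translations vanish. This gives
\begin{equation*}
\frac1\alpha=\lim\int(|x|^{-2}*|v_n|^2)|v_n|^2\le\sum_j\frac1\alpha\|V^j\|_2^2\|\nabla V^j\|_2^2\le \frac1\alpha\sup_j\|V^j\|_2^2\sum_j\|\nabla V^j\|_2^2 .
\end{equation*}
Mass and gradient decouple, so $\sum_j\|V^j\|_2^2\le1$ and $\sum_j\|\nabla V^j\|_2^2\le1$. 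Equality is forced, so exactly one profile carries everything. A single translated profile $V$ is then a minimizer. This compactness step is the main obstacle. One must check that the Hartree term is continuous under weak convergence plus vanishing of remainders in $L^{2D/(D-1)}$, which follows from HLS.

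\textbf{Step 2: the Euler--Lagrange equation.} Replace $V$ by $|V|$, which does not increase $J$. Differentiating $J$ at $V$ gives
\begin{equation*}
-\|V\|_2^2\Delta V+\|\nabla V\|_2^2V-\frac{2\|V\|_2^2\|\nabla V\|_2^2}{\int(|x|^{-2}*V^2)V^2}(|x|^{-2}*V^2)V=0,
\end{equation*}
where the factor 2 comes from the quartic term. Choose $\mu,\lambda$ in $W=\mu V(\lambda x)$ so that the coefficients normalize to the equation \eqref{2.4} for $W$. Since $J$ is invariant, $W$ is still a minimizer.

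\textbf{Step 3: the Pohozaev identities for \eqref{2.4}.} Multiplying \eqref{2.4} by $W$ and by $x\cdot\nabla W$ gives
\begin{equation*}
\|\nabla W\|_2^2+\|W\|_2^2=P,\qquad \frac{D-2}{2}\|\nabla W\|_2^2+\frac D2\|W\|_2^2=\frac{D-2}{4}P\cdot 2\cdot\frac12\cdot\ldots,
\end{equation*}
where $P=\int(|x|^{-2}*W^2)W^2$. The Hartree term scales with weight $2D-2$, so its Pohozaev contribution is $\frac{2D-2}{4}P$. Solving the system yields $\|W\|_2^2=\|\nabla W\|_2^2$ and $P=2\|\nabla W\|_2^2$. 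Therefore
\begin{equation*}
\alpha=J[W]=\frac{\|\nabla W\|_2^2\|\nabla W\|_2^2}{2\|\nabla W\|_2^2}=\frac{\|\nabla W\|_2^2}{2},
\end{equation*}
which gives $C_{\text{opt}}=2/\|\nabla W\|_2^2$.
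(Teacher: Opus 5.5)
Your argument is correct. The paper gives no proof of this lemma and only quotes \cite{Zhu2016}; your route (minimize the scale-invariant Weinstein quotient $J$, get compactness from the profile decomposition using HLS for the Hartree term, rescale the minimizer to solve \eqref{2.4}, and read off $1/\alpha$ from the Nehari and Pohozaev identities) is the standard way this sharp constant is established.

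Two small touch-ups. First, your Pohozaev display in Step 3 is garbled. It should read $\frac{D-2}{2}\|\nabla W\|_2^2+\frac{D}{2}\|W\|_2^2=\frac{D-1}{2}P$, which is the identity you actually use. Second, you should add that every nontrivial solution $Q$ of \eqref{2.4} satisfies $J[Q]=\frac12\|\nabla Q\|_2^2\ge\alpha$. This shows the $W$ you construct is a solution of least $\|\nabla Q\|_2$ (a ground state), and that is how $W$ in the statement must be read: for a solution with larger $\|\nabla Q\|_2$ the inequality would fail.
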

Next, we introduce profile decomposition theory, a useful variational tool which is proposed by Gérard \cite{G1998}, Hmidi and Keraani \cite{HK2005}.  
\begin{prop}\label{prop 5.2}
	Let $D\ge 3$ and $\{u_{n}\}_{n=1}^{+\infty}$ be a bounded sequence in $H^{1}(\mathbb{R}^{D})$, then there exists a subsequence of $\{u_{n}\}_{n=1}^{+\infty}$ (still denoted by $\{u_{n}\}_{n=1}^{+\infty}$), a family $\{x_{n}^{j}\}_{j=1}^{+\infty}$ of sequences in $\mathbb{R}^{D}$ and a family $\{U^{j}\}_{j=1}^{+\infty}$ of functions in $H^{1}(\mathbb{R}^{D})$ fulfilling following properties. 
	\item[(i)] For any $k_{1}\neq k_{2}$
	\begin{equation*}
		|x_{n}^{k_{1}}-x_{n}^{k_{2}}|\to +\infty, \quad\text{as}\quad n\to+\infty. 
	\end{equation*}
	\item[(ii)] For any $J\ge 1$ and $x\in\mathbb{R}^{D}$, $u_{n}$ can be decomposed as 
	\begin{equation}\label{DP}
		u_{n}(x)=\sum_{j=1}^{J}U^{j}(x-x_{n}^{j})+u_{n}^{J}(x),
	\end{equation}
	where 
	\begin{equation*}
		\lim_{J\to+\infty}\limsup_{n\to+\infty}\|u_{n}^{J}\|_{p}=0, \quad\text{for every}\quad p\in(2, \frac{2D}{D-2}), 
	\end{equation*}
	Moreover, we have following estimate. 
	\begin{equation}\label{2.51}
		\|u_{n}\|_{2}^{2}=\sum_{j=1}^{J}\|U^{j}\|_{2}^{2}+\|u_{n}^{J}\|_{2}^{2}+o(1), 
	\end{equation}
	\begin{equation}\label{2.61}
		\|\nabla u_{n}\|_{2}^{2}=\sum_{j=1}^{J}\|\nabla U^{j}\|_{2}^{2}+\|u_{n}^{J}\|_{2}^{2}+o(1), 
	\end{equation}
	\begin{equation}\label{2.71}
		\|u_{n}\|_{q+1}^{q+1}=\sum_{j=1}^{J}\|U^{j}\|_{q+1}^{q+1}+\|u_{n}^{J}\|_{q+1}^{q+1}+o(1), 
	\end{equation}
		\begin{equation}\label{2.81}
		\|(|x|^{-2}*|u_{n}|^{2})|u_{n}|^{2}\|_{1}=\sum_{j=1}^{J}\|(|x|^{-2}*|U^{j}|^{2})|U^{j}|^{2}\|_{1}+\|(|x|^{-2}*|u_{n}^{J}|^{2})|u_{n}^{J}|^{2}\|_{1}+o(1), 
	\end{equation}
	where $1<q<1+\frac{4}{D-2}$ and $o(1)=o_{n}(1)\to 0$ as $n\to+\infty$. 
\end{prop}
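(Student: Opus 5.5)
This is the profile decomposition of G\'erard and Hmidi--Keraani, quoted as a tool. The plan is to prove it by the standard iterative extraction of bubbles, adapted to the Hartree term.

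\textbf{Extraction.} Given a bounded sequence $\{u_n\}$ in $H^1(\mathbb{R}^D)$, define
\[
\eta(\{u_n\})=\sup\{\|V\|_{H^1}: V=\text{weak-}\lim u_n(\cdot+x_n)\ \text{for some } \{x_n\}\subset\mathbb{R}^D\ \text{and some subsequence}\}.
\]
If $\eta=0$, take $U^j\equiv 0$ and we are done. Otherwise choose $x_n^1$ and $U^1$ with $u_n(\cdot+x_n^1)\rightharpoonup U^1$ and $\|U^1\|_{H^1}\ge \frac12\eta(\{u_n\})$, and set $u_n^1=u_n-U^1(\cdot-x_n^1)$. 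Iterating this on the remainders produces profiles $U^j$, centers $x_n^j$, and remainders $u_n^J$.

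Orthogonality (i) follows by induction. If $|x_n^{k_1}-x_n^{k_2}|$ stayed bounded along a subsequence, then after passing to a subsequence it converges. The weak limit of $u_n^{k_1}(\cdot+x_n^{k_2})$ would then vanish, contradicting $U^{k_2}\ne0$.

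\textbf{Pythagorean expansions.} Weak convergence gives $\|u_n^{J-1}\|_{H}^2=\|U^J\|_{H}^2+\|u_n^J\|_H^2+o(1)$ for the $L^2$ norm and for the $\dot H^1$ norm. (The statement's \eqref{2.61} should read $\|\nabla u_n^J\|_2^2$; I would prove that corrected form.) Summing yields \eqref{2.51}--\eqref{2.61}. In particular $\sum_j\|U^j\|_{H^1}^2<\infty$, so $\|U^j\|_{H^1}\to0$, and hence $\eta(\{u_n^J\})\le 2\|U^{J+1}\|_{H^1}\to 0$.

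\textbf{Smallness of the remainder.} This is the key analytic input, a refined inequality of the form
\[
\limsup_n\|v_n\|_{p}\le C\,\eta(\{v_n\})^{\theta}\sup_n\|v_n\|_{H^1}^{1-\theta},\qquad 2<p<\frac{2D}{D-2}.
\]
I would prove it by a Littlewood--Paley or cube-localization argument (the Lions lemma): $L^\infty$-type control of localized pieces comes from weak limits along translates, and is then interpolated against the $H^1$ bound. This gives $\lim_J\limsup_n\|u_n^J\|_p=0$.

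\textbf{Nonlinear splittings.} For \eqref{2.71}, approximate each $U^j$ by compactly supported functions. Orthogonality of the centers kills the cross terms of $|\sum_j U^j(\cdot-x_n^j)|^{q+1}$. The interaction between the profile sum and $u_n^J$ is handled by the Br\'ezis--Lieb lemma combined with the weak convergence $u_n^J(\cdot+x_n^j)\rightharpoonup0$ and local compactness (Rellich).

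For \eqref{2.81} the same scheme applies, but the cross terms are now bilinear-nonlocal, of the form $\int(|x|^{-2}*|f(\cdot-a_n)|^2)|g|^2$. These are bounded by Hardy--Littlewood--Sobolev with exponent $\frac{2D}{D-1}$, and they decay either because $|a_n|\to\infty$ or because weak convergence plus Rellich give local strong convergence to zero.

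\textbf{Main obstacle.} I expect the refined Sobolev-type inequality controlling $\|u_n^J\|_p$ by $\eta$ to be the hardest step, together with the nonlocal cross terms in the Hartree decomposition, where translation decoupling must be justified through the $|x|^{-2}$ kernel's decay.
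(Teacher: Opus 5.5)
Your outline is correct and is essentially the standard G\'erard/Hmidi--Keraani argument: extraction via $\eta$, a refined Sobolev bound for the remainder, and Br\'ezis--Lieb-type splitting, with HLS at exponent $\frac{2D}{D-1}$ for the Hartree term. The paper does not prove this proposition; it cites those works and defers the Hartree splitting \eqref{2.81} to Zhu's JDDE paper. You are also right that the remainder term in \eqref{2.61} should be $\|\nabla u_n^J\|_2^2$, not $\|u_n^J\|_2^2$ as printed.
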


\begin{rem}
We should point out that  the decomposition of  $\|(|x|^{-2}*|u_{n}|^{2})|u_{n}|^{2}\|_{1}$ is obtained by the last author of this paper in \cite{ZhangZhuJDDE}, thus we just omit the proof here.  On the other hand, about the profile decomposition theory, we have the following comments. Proposition \ref{prop 5.2} is another representation of the 
concentration-compactness principle proposed by Lions in \cite{Lions1984}. Actually, 
 the number of  components in   (\ref{2.5}) may be one, finite and infinite, which are corresponding to compactness, dichotomy and vanishing in the concentration-compactness principle. 
 However, there are two major advantages of  Proposition \ref{prop 5.2}: one is
  that (\ref{DP}) provides a  exact  representation of the bounded sequence, which can be applied in the corresponding functionals. The other is orthogonality of the decomposition, which can be used to simplify the calculus of variational problem.
 \end{rem}

\section{Variarional Characterization}\label{Variarional Characterization}
In this section, by utilizing scaling technique and profile decomposition theory, we investigate the variartional characterizations of Eq. \eqref{Hartree equation}, and derive the comparison of the infimum of two variational problems \eqref{variational problem} and \eqref{vpm}. To start with, we consider the variational problem \eqref{variational problem}. 
\begin{prop}\label{d>0}
	Let $D\ge 3$ and $\omega>0$. If $1+\frac{4}{D}\le p<1+\frac{4}{D-2}$, then $d_{\mathfrak{N}}>0$. 
\end{prop}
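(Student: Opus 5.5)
Our plan is to use the Nehari constraint to rewrite $\mathcal{L}$ on $\mathfrak{N}$ as a positive combination of the nonlinear terms. Then we show, via the sharp inequalities \eqref{GN} and \eqref{GGN} and the Sobolev embedding, that elements of $\mathfrak{N}$ cannot have arbitrarily small $\mathcal{K}$.

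\textbf{Step 1 (reduction on $\mathfrak{N}$).} For $u\in\mathfrak{N}$ we have $\mathcal{K}[u]=\int|u|^{p+1}dx+\int(|x|^{-2}*|u|^{2})|u|^{2}dx$. Substituting this into \eqref{lagrange functional} gives
\begin{equation*}
\mathcal{L}[u]=\Big(\frac12-\frac1{p+1}\Big)\int|u|^{p+1}dx+\frac14\int(|x|^{-2}*|u|^{2})|u|^{2}dx\ \ge\ \min\Big\{\frac{p-1}{2(p+1)},\frac14\Big\}\,\mathcal{K}[u].
\end{equation*}
Here we used that both nonlinear terms are nonnegative and sum to $\mathcal{K}[u]$. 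Since $p>1$, the constant is positive. It therefore suffices to bound $\mathcal{K}[u]$ from below uniformly on $\mathfrak{N}$.

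\textbf{Step 2 (lower bound on $\mathcal{K}$).} Since $\omega>0$, $\mathcal{K}[u]$ is equivalent to $\|u\|_{H^1}^2$; precisely, $\mathcal{K}[u]\ge\min\{1,\omega\}\|u\|_{H^1}^2$. We estimate the two nonlinear terms separately.
\begin{itemize}
\item By \eqref{GN} with $2q+2=p+1$, or simply by the Sobolev embedding $H^1\hookrightarrow L^{p+1}$ (valid since $2<p+1<\frac{2D}{D-2}$), we get $\int|u|^{p+1}dx\le C\|u\|_{H^1}^{p+1}\le C'\mathcal{K}[u]^{\frac{p+1}{2}}$.
\item By \eqref{GGN}, $\int(|x|^{-2}*|u|^2)|u|^2dx\le C\|u\|_2^2\|\nabla u\|_2^2\le C''\mathcal{K}[u]^2$.
\end{itemize}
The Nehari identity then yields
\begin{equation*}
\mathcal{K}[u]\le C'\mathcal{K}[u]^{\frac{p+1}{2}}+C''\mathcal{K}[u]^{2}.
\end{equation*}
Since $u\neq0$ we have $\mathcal{K}[u]>0$, so we may divide: $1\le C'\mathcal{K}[u]^{\frac{p-1}{2}}+C''\mathcal{K}[u]$. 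Both exponents are positive, so the right side tends to $0$ as $\mathcal{K}\to0$. Hence there is $\kappa_0>0$, depending only on $D,p,\omega$, such that $\mathcal{K}[u]\ge\kappa_0$ for all $u\in\mathfrak{N}$.

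\textbf{Step 3 (conclusion).} Combining the two steps gives $d_{\mathfrak{N}}\ge\min\{\frac{p-1}{2(p+1)},\frac14\}\kappa_0>0$. For this to be meaningful, $\mathfrak{N}$ must be nonempty. Take any $v\neq0$ and scale $\lambda v$: the quantity $\mathcal{N}[\lambda v]/\lambda^2$ goes from $\mathcal{K}[v]>0$ to $-\infty$ as $\lambda$ grows, so some $\lambda>0$ puts $\lambda v$ in $\mathfrak{N}$. Then $d_{\mathfrak{N}}$ is finite and positive.

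The only delicate point is Step 2: both nonlinearities must be superquadratic in $\|u\|_{H^1}$, which holds because $p>1$ and the Hartree term is quartic. The hypothesis $p\ge1+\frac4D$ is not needed for positivity; only $1<p<1+\frac4{D-2}$ is used.
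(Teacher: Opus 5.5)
Your proof is correct and follows the paper's argument essentially step for step. The paper likewise bounds $\mathcal{K}[u]\le C_1\mathcal{K}[u]^{\frac{p+1}{2}}+C_3\mathcal{K}[u]^{2}$ via the Sobolev embedding and the gGN inequality to get $\mathcal{K}[u]\ge C>0$ on $\mathfrak{N}$, then uses the Nehari constraint to obtain $\mathcal{L}[u]\ge \min\{\frac12-\frac1{p+1},\frac14\}\,\mathcal{K}[u]$; your two remarks (nonemptiness of $\mathfrak{N}$, which only matters for finiteness of $d_{\mathfrak{N}}$, and that $p\ge 1+\frac4D$ is never used) are accurate additions the paper does not make.
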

\begin{proof}
	For any $u\in\mathfrak{N}$, one has $\mathcal{N}[u]=0$. It follows from the Sobolev embedding inequality and gGN inequality that 
	\begin{equation*}
		\begin{split}
			\mathcal{K}[u]&=\int |u|^{p+1}dx+\int (|x|^{-2}*|u|^{2})|u|^{2}dx\\
			&\le C_{1}\mathcal{K}[u]^{\frac{p+1}{2}}+C_{2}\int |u|^{2}dx\cdot\int |\nabla u|^{2}dx\\
			&\le C_{1}\mathcal{K}[u]^{\frac{p+1}{2}}+C_{3}\mathcal{K}[u]^{2}, 
		\end{split}
	\end{equation*}
	where $C_{j} \  (j=1, 2, 3)$ denote positive constant. Thus, one deduces that $\mathcal{K}[u]\ge C>0$. From $\mathcal{N}[u]=0$, $\mathcal{L}[u]$ can be rewritten as follows
	\begin{equation*}
		\begin{split}
			\mathcal{L}[u]&=(\frac{1}{2}-\frac{1}{p+1})\int |u|^{p+1}dx+\frac{1}{4}\int (|x|^{-2}*|u|^{2})|u|^{2}dx\\
			&\ge C_{\min}\mathcal{K}[u]\\
			&>0, 
		\end{split}
	\end{equation*}
	where $C_{\min}=\min\{\frac{1}{2}-\frac{1}{p+1}, \frac{1}{4}\}$. 
	Then, it follows from \eqref{variational problem} that $d_{\mathfrak{N}}>0$. 
	
\end{proof}
Building on the result in Proposition~\ref{d>0}, we can establish  Proposition~\ref{noempty}.
\begin{proof}{\it (The proof of Proposition \ref{noempty}) }Firstly, we will define some non-negative functionals, which is crucial to our proof. 
	\begin{equation}
		\mathcal{L}_{1}[u]=\frac{1}{4}\mathcal{K}[u]+(\frac{1}{4}-\frac{1}{p+1})\int |u|^{p+1}dx, 
	\end{equation}
	\begin{equation}
		\mathcal{L}_{2}[u]=(\frac{1}{2}-\frac{1}{p+1})\mathcal{K}[u]+(\frac{1}{p+1}-\frac{1}{4})\int (|x|^{-2}*|u|^{2})|u|^{2}dx. 
	\end{equation}
	Actually, with different range of $p$, one can rewrite $\mathcal{L}[u]$ as follows. 
	\begin{equation}\label{3.10}
		\begin{split}
		\mathcal{L}[u]&=\frac{1}{4}\mathcal{N}[u]+\mathcal{L}_{1}[u], \qquad (3\le p<1+\frac{4}{D-2}),\\
			&=\frac{1}{p+1}\mathcal{N}[u]+\mathcal{L}_{2}[u], \qquad (1+\frac{4}{D}\le p<3). 
		\end{split}
	\end{equation}
	It follows from \eqref{3.10} that,  the variational problem \eqref{variational problem} is changed to 
	\begin{equation}\label{variational problem 2}
		d_{\mathfrak{N}}=\inf_{u\in\mathfrak{N}}\mathcal{L}_{k}[u], 
	\end{equation}
	where $k=1$ for $3\le p<1+\frac{4}{D-2}$ and $k=2$ for $1+\frac{4}{D}\le p<3$. 
	
	Let $\{u_{n}\}_{n=1}^{+\infty}$ be the minimizing sequence of the variational problem \eqref{variational problem 2}, then there holds 
	\begin{equation*}
		\mathcal{N}[u_{n}]=0,\quad \mathcal{L}_{k}[u_{n}]\to d_{\mathfrak{N}}\quad\text{as}\quad n\to+\infty. 
	\end{equation*}
	It follows from the definition of $\mathcal{L}_{k}[u]$ that $\{u_{n}\}_{n=1}^{+\infty}$ is bounded in $H^{1}(\mathbb{R}^{D})$, then $u_{n}$ can be decomposed as follows. 
	\begin{equation*}
		u_{n}(x)=\sum_{j=1}^{J}U^{j}(x-x_{n}^{j})+u_{n}^{J}(x),
	\end{equation*}
	Moreover, estimates \eqref{2.51}-\eqref{2.81} hold, and  one has 
	\begin{equation*}
		\mathcal{N}[u_{n}]=\sum_{j=1}^{J}\mathcal{N}[U^{j}]+\mathcal{N}[u_{n}^{J}]+o(1)
	\end{equation*}
	and
	\begin{equation*}
		\mathcal{L}_{k}[u_{n}]=\sum_{j=1}^{J}\mathcal{L}_{k}[U^{j}]+\mathcal{L}_{k}[u_{n}^{J}]+o(1), 
	\end{equation*}
	where $o(1)=o_{n}(1)\to 0$ as $n\to+\infty$. 
	
	Here, we claim that $\mathcal{N}[U^{j}]\ge 0$ for all $1\le j\le J$. If not, there exists $1\le j_{0}\le J$ such that $\mathcal{N}[U^{j_{0}}]<0$, then one can select suitable $0<\lambda<1$ such that $\mathcal{N}[\lambda U^{j_{0}}]=0$, thus $\mathcal{L}_{k}[\lambda U^{j_{0}}]\ge d_{\mathfrak{N}}$ by $\lambda U^{j_{0}}\in\mathfrak{N}$. Besides, following estimate holds 
	\begin{equation}
		\begin{split}
			\lambda^{2}\mathcal{L}_{k}[U^{j_{0}}]>\mathcal{L}_{k}[\lambda U^{j_{0}}]\ge d_{\mathfrak{N}}\ge \mathcal{L}_{k}[U^{j_{0}}], 
		\end{split}
	\end{equation}
	here, the last inequality follows from decomposition of $\mathcal{L}_{k}[u_{n}]$. Clearly, the contradiction appears, then $\mathcal{N}[U^{j}]\ge 0$ for all $1\le j\le J$. And the decomposition of $\mathcal{N}[u_{n}]$ also shows that $\mathcal{N}[U^{j}]=0$ for all $1\le j\le J$. 
	
	Besides, there exists a unique $1\le j_{1}\le J$ such that $\mathcal{N}[U^{j_{1}}]=0$ and $\mathcal{L}_{k}[U^{j_{1}}]=d_{\mathfrak{N}}$. Indeed, from $\mathcal{N}[U^{j_{1}}]=0$ and decomposition of $\mathcal{L}_{k}[u_{n}]$, one has
	\begin{equation*}
		\mathcal{L}_{k}[U^{j_{1}}]\ge d_{\mathfrak{N}}\ge \mathcal{L}_{k}[U^{j_{1}}], 
	\end{equation*}
	this estimate also shows the uniqueness. Then we finish the first part of this proof. 
	
	If $u\in\mathcal{O}$, then $\mathcal{N}[u]=0$, and 
		\begin{equation}
		\begin{split}
			<\mathcal{N}'[u], u>&=2\int |\nabla u|^{2}dx+2\omega\int |u|^{2}dx-(p+1)\int |u|^{p+1}dx\\
			&\quad-4\int (|x|^{-2}*|u|^{2})|u|^{2}dx\\
			&=(1-p)\int |u|^{p+1}dx-2\int (|x|^{-2}*|u|^{2})|u|^{2}dx\\
			&<0. 
		\end{split}
	\end{equation}
	Since $u$ is the minimizer of the variational problem \eqref{variational problem}, thus there exists a lagrange multiplier $\Lambda\in \mathbb{R}$ such that 
	\begin{equation}
		\mathcal{L}'[u]+\Lambda \mathcal{N}'[u]=0, 
	\end{equation}
	then
	\begin{equation}
		<\mathcal{L}'[u]+\Lambda \mathcal{N}'[u], u>=\mathcal{N}[u]+\Lambda<\mathcal{N}'[u], u>=0, 
	\end{equation}
	which implies $\Lambda=0$, then $\mathcal{L}'[u]=0$. For any ground state solution $v$ of Eq. \eqref{Hartree equation}, one has $\mathcal{L}'[v]=0$, then $\mathcal{N}[v]=0$, $v\in\mathfrak{N}$. It follows that
	\begin{equation}
		\mathcal{L}[u]=d_{\mathfrak{N}}\le \mathcal{L}[v], 
	\end{equation}
	which implies that $u$ is a ground state solution of elliptic equation \eqref{elliptic}. 
	
\end{proof}

Naturally, one can obtain following proposition. 
\begin{prop}\label{prop 3.3}
	Let $D\ge 3$ and $\omega>0$. There exists $u\in H^{1}(\mathbb{R}^{D})\setminus \{0\}$ such that $\mathcal{N}[u]=0$ and $\mathcal{V}[u]=0$. 
\end{prop}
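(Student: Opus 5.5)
The natural candidate is the ground state $u$ given by Proposition \ref{noempty}: it lies in $\mathfrak{N}$, so $u\neq 0$ and $\mathcal{N}[u]=0$, and Proposition \ref{noempty} shows $\mathcal{L}'[u]=0$, i.e. $u$ solves \eqref{elliptic}. It then remains to show $\mathcal{V}[u]=0$, which is exactly the Pohozaev identity for \eqref{elliptic}. We therefore work in the range $1+\frac{4}{D}\le p<1+\frac{4}{D-2}$ where Proposition \ref{noempty} is available; the statement is presumably meant in that range, as for the surrounding results.

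For the Pohozaev step we use the $L^2$-invariant scaling $u_\lambda(x)=\lambda^{D/2}u(\lambda x)$. Under this scaling
\begin{equation*}
\|\nabla u_\lambda\|_2^2=\lambda^2\|\nabla u\|_2^2,\qquad \|u_\lambda\|_{p+1}^{p+1}=\lambda^{\frac{D(p-1)}{2}}\|u\|_{p+1}^{p+1},
\end{equation*}
while $\|u_\lambda\|_2=\|u\|_2$ and the Hartree term $\int(|x|^{-2}*|u_\lambda|^2)|u_\lambda|^2dx$ scales like $\lambda^{2}$, because the kernel $|x|^{-2}$ is homogeneous of degree $-2$. Since $\mathcal{L}'[u]=0$, the derivative $\frac{d}{d\lambda}\mathcal{L}[u_\lambda]\big|_{\lambda=1}$ vanishes. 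Computing this derivative term by term gives
\begin{equation*}
\|\nabla u\|_2^2-\frac{D(p-1)}{2(p+1)}\|u\|_{p+1}^{p+1}-\frac12\int(|x|^{-2}*|u|^2)|u|^2dx=\mathcal{V}[u],
\end{equation*}
so $\mathcal{V}[u]=0$. The $\omega\|u\|_2^2$ part of $\mathcal{K}$ contributes nothing, since the scaling preserves the $L^2$ norm.

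The main obstacle is justifying $\frac{d}{d\lambda}\mathcal{L}[u_\lambda]|_{\lambda=1}=\langle\mathcal{L}'[u],\partial_\lambda u_\lambda|_{\lambda=1}\rangle$. The derivative $\partial_\lambda u_\lambda|_{\lambda=1}=\frac D2u+x\cdot\nabla u$ need not belong to $H^1$. Two ways around this:
\begin{itemize}
\item Use elliptic regularity for \eqref{elliptic}, which gives $u\in H^2$ and sufficient decay, and then multiply the equation by $x\cdot\nabla\bar u$ with a cutoff $\phi(x/R)$, integrate by parts, and let $R\to\infty$. The Hartree term is handled by the identity $\mathrm{Re}\int(|x|^{-2}*|u|^2)u\,x\cdot\nabla\bar u\,dx=-\frac{D-2}{4}\int(|x|^{-2}*|u|^2)|u|^2dx$, obtained by symmetrizing in $x,y$.
\item Avoid differentiating along the curve altogether: note that $\lambda\mapsto\mathcal{L}[u_\lambda]$ is an explicit $C^1$ function of $\lambda$, and check directly that its stationarity at $\lambda=1$ follows from the minimality of $u$. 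This does not work as stated, because $u_\lambda$ leaves $\mathfrak{N}$, so it would need a combined Nehari-plus-scaling projection argument.
\end{itemize}
I would take the first route, which is standard.
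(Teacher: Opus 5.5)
Your argument follows the paper's proof. You take the ground state from Proposition \ref{noempty}, so $\mathcal{N}[u]=0$, and get $\mathcal{V}[u]=0$ from the Pohozaev identity obtained by multiplying \eqref{elliptic} by $x\cdot\nabla\bar u$. The paper records this identity as $\mathcal{N}[u]-\frac{2}{D}\mathcal{V}[u]=0$. Like you, it implicitly works in the range $1+\frac{4}{D}\le p<1+\frac{4}{D-2}$ where Proposition \ref{noempty} is stated. Your scaling computation $\frac{d}{d\lambda}\mathcal{L}[u_\lambda]\big|_{\lambda=1}=\mathcal{V}[u]$ is correct. You also rightly note that a regularity and cutoff justification is needed, which the paper skips.

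There is one error in your first route: the Hartree identity you quote is false. Symmetrizing in $x,y$ correctly gives
\[
\mathrm{Re}\int(|x|^{-2}*|u|^2)\,u\,x\cdot\nabla\bar u\,dx=-\frac{D-1}{2}\int(|x|^{-2}*|u|^2)|u|^2dx .
\]
The reason is that $\nabla_x\cdot\big(x|x-y|^{-2}\big)=D|x-y|^{-2}-2x\cdot(x-y)|x-y|^{-4}$ symmetrizes to $(D-1)|x-y|^{-2}$. You can confirm it by scaling: $\int(|x|^{-2}*|u(\lambda\cdot)|^2)|u(\lambda\cdot)|^2dx=\lambda^{2-2D}\int(|x|^{-2}*|u|^2)|u|^2dx$.

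With your constant $-\frac{D-2}{4}$, the multiplier computation would not reproduce your scaling derivative. The Hartree coefficient in $\mathrm{Re}\langle\mathcal{L}'[u],\frac{D}{2}u+x\cdot\nabla u\rangle$ would come out as $-\frac{D+2}{4}$ instead of $-\frac12$, so $\mathcal{V}[u]=0$ would not follow.

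With the correct constant, the multiplier identity is
\[
\frac{D-2}{2}\|\nabla u\|_2^2+\frac{D\omega}{2}\|u\|_2^2-\frac{D}{p+1}\|u\|_{p+1}^{p+1}-\frac{D-1}{2}\int(|x|^{-2}*|u|^2)|u|^2dx=0 .
\]
This is exactly $\frac{D}{2}\mathcal{N}[u]-\mathcal{V}[u]=0$, the paper's identity, and with $\mathcal{N}[u]=0$ your proof goes through.
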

\begin{proof}
	
 It follows from Proposition \ref{noempty} that there exists a nontrivial $u$ such that $u$ is a ground state solution of Eq. \eqref{elliptic}, which indicates that $\mathcal{N}[u]=0$. Mutiplying Eq. (\ref{elliptic}) with $x\cdot\nabla \bar{u}$, one can obtain the Pohozaev identity, which is a linear combination of $\mathcal{N}[u]$ and $\mathcal{V}[u]$ as follows. 
	\begin{equation*}
		\mathcal{N}[u]-\frac{2}{D}\mathcal{V}[u]=0, 
	\end{equation*}
	thus, one has $\mathcal{V}[u]=0$ from $\mathcal{N}[u]=0$. 
	
\end{proof}

\begin{prop}
	Let $D\ge 3$ and $\omega>0$. If $1+\frac{4}{D}\le p<1+\frac{4}{D-2}$, then $\mathfrak{M}$ is not empty. 
\end{prop}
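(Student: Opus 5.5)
The plan is to produce an element of $\mathfrak{M}$ explicitly from an arbitrary nonzero profile, using a two-parameter family: an amplitude scaling combined with the mass-preserving dilation. Fix any $\varphi\in H^{1}(\mathbb{R}^{D})\setminus\{0\}$. Write
$$A=\|\nabla\varphi\|_{2}^{2},\quad M=\|\varphi\|_{2}^{2},\quad B=\|\varphi\|_{p+1}^{p+1},\quad H=\int(|x|^{-2}*|\varphi|^{2})|\varphi|^{2}dx,$$
and set $s=\frac{D(p-1)}{2}\ge 2$ and $\kappa=\frac{D(p-1)}{2(p+1)}\in(0,1)$. For $c>0$ and $\lambda>0$ consider $w_{c,\lambda}(x)=c\lambda^{D/2}\varphi(\lambda x)$. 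A direct computation, using that the Hardy term is $L^{2}$-critical, gives
$$\mathcal{V}[w_{c,\lambda}]=c^{2}\lambda^{2}A-\kappa c^{p+1}\lambda^{s}B-\tfrac12 c^{4}\lambda^{2}H,$$
$$\mathcal{N}[w_{c,\lambda}]-\mathcal{V}[w_{c,\lambda}]=\omega c^{2}M-(1-\kappa)c^{p+1}\lambda^{s}B-\tfrac12 c^{4}\lambda^{2}H.$$

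\textbf{Step 1 (choosing $c=c(\lambda)$).} For each fixed $\lambda$, the quantity $\mathcal{V}[w_{c,\lambda}]/c^{2}=\lambda^{2}A-\kappa c^{p-1}\lambda^{s}B-\frac12c^{2}\lambda^{2}H$ is continuous and strictly decreasing in $c$. It tends to $\lambda^{2}A>0$ as $c\to0$ and to $-\infty$ as $c\to\infty$. Hence there is a unique $c(\lambda)>0$ with $\mathcal{V}[w_{c(\lambda),\lambda}]=0$. With this choice, $\mathcal{N}[w_{c(\lambda),\lambda}]$ equals the right-hand side of the second display.

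\textbf{Step 2 (sign of $\mathcal{N}$ for large $\lambda$).} Dividing the relation $\mathcal{V}=0$ by $c^{2}\lambda^{2}$ gives
$$A=\kappa c^{p-1}\lambda^{s-2}B+\tfrac12c^{2}H.$$
So for each $\lambda$ at least one of the following holds:
$$\text{(a) } \kappa c^{p-1}\lambda^{s-2}B\ge A/2, \qquad \text{(b) } c^{2}H\ge A.$$
In case (a), the ratio of $(1-\kappa)c^{p+1}\lambda^{s}B$ to $\omega c^{2}M$ is
$$\frac{(1-\kappa)c^{p-1}\lambda^{s}B}{\omega M}\ge \frac{(1-\kappa)A}{2\kappa\omega M}\,\lambda^{2}.$$
In case (b), the ratio of $\frac12c^{4}\lambda^{2}H$ to $\omega c^{2}M$ is
$$\frac{c^{2}\lambda^{2}H}{2\omega M}\ge\frac{A}{2\omega M}\,\lambda^{2}.$$
In both cases one negative term dominates $\omega c^{2}M$ once $\lambda$ is large, uniformly in which case occurs. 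Therefore $\mathcal{N}[w_{c(\lambda),\lambda}]<0$ for all $\lambda$ sufficiently large, while $\mathcal{V}[w_{c(\lambda),\lambda}]=0$. This element lies in $\mathfrak{M}$, which is therefore nonempty.

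The only delicate point is Step 2. The function $c(\lambda)$ has no closed form and may tend to $0$ (this happens when $s>2$), so one must avoid estimating $c(\lambda)$ directly. The dichotomy (a)/(b) extracted from the constraint $\mathcal{V}=0$ is what resolves this: it yields a lower bound on the relevant negative term relative to the mass term that is uniform in $c$. The assumption $p\ge1+\frac4D$ enters only through $s\ge2$, which keeps the dichotomy clean. One could alternatively start from the ground state of Proposition~\ref{prop 3.3}, but the construction above does not need it.
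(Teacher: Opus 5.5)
Your argument is correct, and it takes a genuinely different route from the paper.

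\textbf{The paper's route.} The paper does not start from an arbitrary profile. It takes the ground state $u$ furnished by Proposition~\ref{noempty}, whose existence is proved by profile decomposition, and uses the Pohozaev identity to get $\mathcal{N}[u]=\mathcal{V}[u]=0$ (Proposition~\ref{prop 3.3}). It then splits into two cases.
\begin{itemize}
\item For $p=1+\frac4D$ it applies the mass-preserving dilation $\lambda^{D/2}u(\lambda x)$. This keeps $\mathcal{V}=0$ and gives $\mathcal{N}[u_\lambda]=\omega\|u\|_2^2(1-\lambda^2)<0$ for $\lambda>1$. This is your construction with $s=2$ and $c$ frozen.
\item For $p>1+\frac4D$ it first multiplies $u$ by a constant larger than $1$, to get $\mathcal{N},\mathcal{V}<0$ together with $\kappa\|v\|_{p+1}^{p+1}<\|\nabla v\|_2^2<\|v\|_{p+1}^{p+1}$. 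It then uses the $L^{p+1}$-homogeneous rescaling $\lambda^{2/(p-1)}v(\lambda x)$ and the intermediate value theorem to reach $\mathcal{V}=0$, asserting without detail that $\mathcal{N}$ stays negative along the way.
\end{itemize}

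\textbf{Your route and what each buys.} Your two-parameter family replaces all of this. Choosing the amplitude $c(\lambda)$ enforces $\mathcal{V}=0$ exactly at every scale. The dichotomy (a)/(b) read off from that constraint then bounds the negative part of $\mathcal{N}-\mathcal{V}$ from below, uniformly in the unknown $c(\lambda)$. You get a self-contained, case-free construction. It needs neither the existence of a ground state (hence no profile decomposition) nor the Pohozaev identity, and it avoids the sign-persistence step. The paper's route reuses objects it has already built. In the case $p=1+\frac4D$ it also produces elements of $\mathfrak{M}$ arbitrarily close to the ground state in $H^1$.

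\textbf{A correction to your closing remark.} Step 2 never uses $s\ge 2$. The bound in case (a) comes from multiplying $\kappa c^{p-1}\lambda^{s-2}B\ge A/2$ by $\lambda^2/\kappa$, which is valid for any $s>0$. The only structural input is $0<\kappa<1$, i.e.\ $1<p<1+\frac{4}{D-2}$. So your argument in fact shows $\mathfrak{M}\neq\emptyset$ for the whole range $1<p<1+\frac{4}{D-2}$, not just for $p\ge 1+\frac4D$.
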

\begin{proof}
	It follows from Proposition \ref{prop 3.3} that $\mathcal{N}[u]=0$ and $\mathcal{V}[u]=0$ for $u\in H^{1}(\mathbb{R}^{D}) \setminus \{0\}$. We will proceed this proof with two cases. 
	
	\textbf{Case (i). }$1+\frac{4}{D}<p<1+\frac{4}{D-2}$.
	For any $\lambda>1$, one has $\mathcal{N}[\lambda u]<0$ and $\mathcal{V}[\lambda u]<0$. By denoting $v=\lambda u$ here, and  selecting suitable $\lambda$, one has following estimates
	\begin{equation}
		\int |\nabla v|^{2}dx-\int |v|^{p+1}dx<0, 
	\end{equation}
		\begin{equation}\label{3.26}
		\int |\nabla v|^{2}dx-\frac{D(p-1)}{2(p+1)}\int |v|^{p+1}dx>0. 
	\end{equation}
	Consider following re-scaling $v_{\lambda}=\lambda^{\frac{1}{p-2}}v(\lambda x)$.
	\begin{equation}
		\begin{split}
		\mathcal{N}[v_{\lambda}]&=\lambda^{\frac{2(p+1)}{p-1}-D}\Big(\int |\nabla v|^{2}dx-\int |u|^{p+1}dx\Big)+\omega\lambda^{\frac{4}{p-1}-D}\int |u|^{2}dx\\
		&-\lambda^{\frac{8}{p-1}-2D+2}\int (|x|^{-2}*|u|^{2})|u|^{2}dx, 
		\end{split}
	\end{equation}
	\begin{equation}
		\begin{split}
		\mathcal{V}[v_{\lambda}]&=\lambda^{\frac{2(p+1)}{p-1}-D}\Big(\int |\nabla v|^{2}dx-\frac{D(p-1)}{2(p+1)}\int |u|^{p+1}dx\Big)\\
		&-\frac{1}{2}\lambda^{\frac{8}{p-1}-2D+2}\int (|x|^{-2}*|u|^{2})|u|^{2}dx. 
		\end{split}
	\end{equation}
	One can see that $\frac{2(p+1)}{p-1}-D>0$ and  $\frac{2(p+1)}{p-1}-D>\frac{8}{p-1}-2D+2>\frac{4}{p-1}-D$. Thus, from \eqref{3.26}, one has  $\lim\limits_{\lambda\to+\infty}\mathcal{V}[v_{\lambda}]=+\infty$, and there exists $\lambda^{*}>1$ such that $\mathcal{V}[u_{\lambda^{*}}]=0$. Meanwhile, $\mathcal{N}[v_{\lambda^{*}}]$ conserve negative, which implies $v_{\lambda^{*}}\in\mathfrak{M}$. 
	
	\textbf{Case (ii). }$p=1+\frac{4}{D}$. In this case, we start from $\mathcal{N}[u]=0$ and $\mathcal{V}[u]=0$. Define the scaling transformation $u_{\lambda}=\lambda^{\frac{D}{2}}u(\lambda x)$. 
	\begin{equation}
		\begin{split}
				\mathcal{N}[u_{\lambda}]&=\lambda^{2}\int |\nabla v|^{2}dx+\omega\int |u|^{2}dx-\lambda^{2}\int |u|^{\frac{2N+4}{N}}dx\\
				&-\lambda^{2}\int (|x|^{-2}*|u|^{2})|u|^{2}dx, 
		\end{split}
	\end{equation}
	\begin{equation}
		\mathcal{V}[u_{\lambda}]=\lambda^{2}\mathcal{V}[u]=0, 
	\end{equation}
Then, $\lambda>1$ implies  $\mathcal{N}[u_{\lambda}]<0$, and  $u_{\lambda}\in\mathfrak{M}$ for $p=1+\frac{4}{D}$. 
	
\end{proof}

\begin{prop}
	Let $D\ge 3$ and $\omega>0$. If $1+\frac{4}{D}\le p<1+\frac{4}{D-2}$, then $d_{\mathfrak{M}}>0$. 
\end{prop}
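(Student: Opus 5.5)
We prove $d_{\mathfrak{M}}>0$ by mimicking the argument of Proposition \ref{d>0}. The plan has two parts: first show that the kinetic quantity $\mathcal{K}[u]$ is bounded below by a positive constant on $\mathfrak{M}$, and then use the constraint $\mathcal{V}[u]=0$ to write $\mathcal{L}[u]$ as a sum of nonnegative terms that control $\mathcal{K}[u]$ from below.

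\textbf{Step 1: a uniform lower bound.} Take $u\in\mathfrak{M}$, so that $\mathcal{N}[u]<0$ and $\mathcal{V}[u]=0$. From $\mathcal{N}[u]<0$, the Sobolev embedding and the gGN inequality \eqref{GGN} give
\begin{equation*}
\mathcal{K}[u]<\int |u|^{p+1}dx+\int (|x|^{-2}*|u|^{2})|u|^{2}dx\le C_{1}\mathcal{K}[u]^{\frac{p+1}{2}}+C_{3}\mathcal{K}[u]^{2}.
\end{equation*}
Since $p>1$ and $u\neq 0$, this forces $\mathcal{K}[u]\ge C>0$, with $C$ independent of $u$.

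\textbf{Step 2: rewriting $\mathcal{L}$ on $\mathfrak{M}$.} We subtract $\frac12\mathcal{V}[u]=0$ from $\mathcal{L}[u]$. The gradient terms cancel, and so do the Hartree terms, since $\frac14-\frac14=0$. What remains is
\begin{equation*}
\mathcal{L}[u]=\frac{\omega}{2}\int |u|^{2}dx+\Big(\frac{D(p-1)}{4(p+1)}-\frac{1}{p+1}\Big)\int |u|^{p+1}dx.
\end{equation*}
The coefficient $\frac{D(p-1)-4}{4(p+1)}$ is nonnegative exactly when $p\ge 1+\frac{4}{D}$. Hence $\mathcal{L}[u]\ge \frac{\omega}{2}\|u\|_{2}^{2}$.

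\textbf{Step 3: bounding the mass from below (the main obstacle).} It remains to show that $\|u\|_{2}$ cannot tend to zero along $\mathfrak{M}$. Step 1 only controls $\mathcal{K}[u]$, which a priori could be dominated by the gradient part. To handle this, we use the nonnegative combination
\begin{equation*}
\mathcal{L}[u]-\theta\mathcal{V}[u]\quad\text{for } \theta\in\Big[\tfrac14,\tfrac12\Big],
\end{equation*}
which is available since $\mathcal{V}[u]=0$. Alternatively, we argue by contradiction.

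Suppose $u_n\in\mathfrak{M}$ with $\mathcal{L}[u_n]\to0$. By Step 2, both $\|u_n\|_2\to0$ and, when $p>1+\frac4D$, $\|u_n\|_{p+1}\to0$. Now use $\mathcal{V}[u_n]=0$ together with \eqref{GN} for the power term (with $qD=\frac{D(p-1)}{2}\ge2$) and \eqref{GGN} for the Hartree term:
\begin{equation*}
\|\nabla u_n\|_2^{2}\le C\|\nabla u_n\|_2^{\frac{D(p-1)}{2}}\|u_n\|_2^{\,p+1-\frac{D(p-1)}{2}}+\frac{1}{\|\nabla W\|_2^{2}}\|u_n\|_2^{2}\|\nabla u_n\|_2^{2}.
\end{equation*}

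\emph{Case $p=1+\frac4D$.} The exponent of $\|\nabla u_n\|_2$ on the right is $2$, so dividing by $\|\nabla u_n\|_2^2$ gives $1\le C\|u_n\|_2^{4/D}+C\|u_n\|_2^2$. This contradicts $\|u_n\|_2\to0$.

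\emph{Case $p>1+\frac4D$.} The Hartree term is absorbed once $\|u_n\|_2$ is small, which yields $\|\nabla u_n\|_2\gtrsim \|u_n\|_2^{-\alpha}$ with $\alpha>0$, so $\|\nabla u_n\|_2\to\infty$. At the same time, $\mathcal{V}[u_n]=0$ gives $\|\nabla u_n\|_2^2\le C\|u_n\|_{p+1}^{p+1}+C\|u_n\|_2^2\|\nabla u_n\|_2^2$, hence $\|\nabla u_n\|_2^2\lesssim \|u_n\|_{p+1}^{p+1}\to0$. This is a contradiction.

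Therefore $\inf_{\mathfrak{M}}\mathcal{L}>0$, that is, $d_{\mathfrak{M}}>0$. The step requiring the most care is Step 3, specifically the borderline case $p=1+\frac4D$, where the power-term coefficient in Step 2 vanishes and the mass bound must come entirely from the GN scaling.
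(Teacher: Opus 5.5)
Your proof is correct and follows the same route as the paper. On $\mathfrak{M}$ you write $\mathcal{L}[u]=\mathcal{L}[u]-\frac12\mathcal{V}[u]\ge\frac{\omega}{2}\|u\|_2^2$ and take a sequence with $\mathcal{L}[u_n]\to0$, so that $\|u_n\|_2\to0$ (and $\|u_n\|_{p+1}\to0$ when $p>1+\frac4D$); you then reach a contradiction separately for $p=1+\frac4D$ and $p>1+\frac4D$ using the GN and gGN inequalities. The only differences are that you draw the contradiction from $\mathcal{V}[u_n]=0$ rather than from $\mathcal{N}[u_n]<0$ as the paper does, so your Step 1 is never used, and your treatment of $p>1+\frac4D$ is cleaner than the paper's garbled argument there.
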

\begin{proof}
	Let $u\in\mathfrak{M}$, then $\mathcal{N}[u]<0$ and $\mathcal{V}[u]=0$. And $\mathcal{L}[u]$ can be rewritten as 
	\begin{equation}
		\mathcal{L}[u]=\frac{\omega}{2}\int |u|^{2}dx+\frac{1}{p+1}\Big[\frac{N(p-1)}{4}-1\Big]\int |u|^{p+1}dx\ge 0, 
	\end{equation} 
then, from \eqref{vpm}, one has $d_{\mathfrak{M}}\ge 0$. Assume by contradiction, if $d_{\mathfrak{M}}=0$, let $\{u_{n}\}_{n=1}^{+\infty}\subset\mathfrak{M}$ be the sequence such that $\mathcal{N}[u_{n}]<0$, $\mathcal{V}[u_{n}]=0$, and $\mathcal{L}[u_{n}]\to 0$ as $n\to+\infty$. 

\textbf{Case (i). }When $1+\frac{4}{D}<p<1+\frac{4}{D-2}$, one sees that  $\int |u_{n}|^{2}dx\to 0$ and $\int |u_{n}|^{p+1}dx\to 0$  as $n\to+\infty$. By $\mathcal{N}[u_{n}]<0$, one derives
\begin{equation}
	\begin{split}
			\mathcal{K}[u_{n}]&<\int |u_{n}|^{p+1}dx+\int (|x|^{-2}*|u_{n}|^{2})|u_{n}|^{2}dx\\
			&\le \int |u_{n}|^{p+1}dx+C\int |\nabla u_{n}|^{2}dx\cdot\int |u_{n}|^{2}dx, 
	\end{split}
\end{equation}
where $C$ denotes the positive constant. However, by letting $n\to+\infty$, one obtains
\begin{equation}
	\mathcal{K}[u_{n}]\ge \int |u_{n}|^{p+1}dx+C\int |\nabla u_{n}|^{2}dx\cdot\int |u_{n}|^{2}dx, 
\end{equation}
thus $d_{\mathfrak{M}}>0$ for $1+\frac{4}{D}<p<1+\frac{4}{D-2}$. 

\textbf{Case (ii). }When $p=1+\frac{4}{D}$, it follows that  $\int |u_{n}|^{2}dx\to 0$ as $n\to+\infty$. Similarly, from $\mathcal{N}[u_{n}]<0$, one has 
\begin{equation}
	\begin{split}
		\mathcal{K}[u_{n}]&<\int |u_{n}|^{p+1}dx+\int (|x|^{-2}*|u_{n}|^{2})|u_{n}|^{2}dx\\
		&\le C_{1}(\int |\nabla u_{n}|^{2}dx)^{\frac{D(p-1)}{4}}\cdot(\int |u_{n}|^{2}dx)^{\frac{p+1}{2}-\frac{D(p-1)}{4}}\\
		&\quad+C_{2}\int |\nabla u_{n}|^{2}dx\cdot\int |u_{n}|^{2}dx, 
	\end{split}
\end{equation}
where $C_{1}$ and $C_{2}$ denote the positive constant. However, when $n\to+\infty$, one obtains
\begin{equation}
	\begin{split}
			\mathcal{K}[u_{n}]&\ge C_{1}(\int |\nabla u_{n}|^{2}dx)^{\frac{D(p-1)}{4}}\cdot(\int |u_{n}|^{2}dx)^{\frac{p+1}{2}-\frac{D(p-1)}{4}}\\
			&\quad+C_{2}\int |\nabla u_{n}|^{2}dx\cdot\int |u_{n}|^{2}dx, 
	\end{split}
\end{equation}
which leads a  contradiction. Hence,  $d_{\mathfrak{M}}>0$ in this case. 

\end{proof}

Now, we have derived some important properties for  $d_{\mathfrak{N}}$ and  $d_{\mathfrak{M}}$, which can be used to prove that $d_{\mathfrak{N}}\le d_{\mathfrak{M}}$ holds naturally for all $\omega>0$. 
\begin{proof}{\it (The proof of Proposition \ref{compare})}
	Define the scaling transformation  $u_{\lambda}=\lambda^{\frac{D}{2}}u(\lambda x)$. It follows that 
	\begin{equation}
		\begin{split}
			\mathcal{L}[u_{\lambda}]&=\frac{1}{2}\lambda^{2}\int |\nabla u|^{2}dx+\frac{\omega}{2}\int |u|^{2}dx-\frac{1}{p+1}\lambda^{\frac{D(p-1)}{2}}\int |u|^{p+1}dx\\
			&\quad-\frac{1}{4}\lambda^{2}\int (|x|^{-2}*|u|^{2})|u|^{2}dx, 
		\end{split}
	\end{equation}
	\begin{equation}
		\begin{split}
			\mathcal{N}[u_{\lambda}]&=\lambda^{2}\int |\nabla u|^{2}dx+\omega\int |u|^{2}dx-\lambda^{\frac{D(p-1)}{2}}\int |u|^{p+1}dx\\
			&\quad-\lambda^{2}\int (|x|^{-2}*|u|^{2})|u|^{2}dx, 
		\end{split}
	\end{equation}
	\begin{equation}
		\begin{split}
			\mathcal{V}[u_{\lambda}]&=\lambda^{2}\int |\nabla u|^{2}dx-\frac{D(p-1)}{2(p+1)}\lambda^{\frac{D(p-1)}{2}}\int |u|^{p+1}dx\\
			&\quad-\frac{1}{2}\lambda^{2}\int (|x|^{-2}*|u|^{2})|u|^{2}dx.
		\end{split}
	\end{equation}
After some careful checking, one can find  that 
	\begin{equation}\label{3.28}
		\frac{d}{d\lambda}\mathcal{L}[u_{\lambda}]=\frac{\mathcal{V}[u_{\lambda}]}{\lambda}. 
	\end{equation}
	Let $u\in\mathfrak{M}$, then $\mathcal{N}[u]<0$ and $\mathcal{V}[u]=0$. There exists $\lambda^{*}\in(0, 1)$ such that $\mathcal{N}[u_{\lambda^{*}}]=0$, then $\mathcal{L}[u_{\lambda^{*}}]\ge d_{\mathfrak{N}}$ by $u_{\lambda^{*}}\in\mathfrak{N}$. 
	
	From the structure of $\mathcal{V}[u_{\lambda}]$, the following results hold
	\begin{itemize}
		\item[(i)] $p=1+\frac{4}{D}$. From \eqref{3.28} and $\mathcal{V}[u]=0$, one sees that $\frac{d}{d\lambda}\mathcal{L}[u_{\lambda}]=\lambda^{2}\mathcal{V}[u]=0$, which implies $\mathcal{L}[u]=\mathcal{L}[u_{\lambda^{*}}]$. 
		\item[(ii)] $1+\frac{4}{D}<p<1+\frac{4}{D-2}$. There exists a unique $\lambda_{0}$ such that $\mathcal{V}[u_{\lambda_{0}}]=0$, and from \eqref{3.28},  $\lambda_{0}$ is the maximum point of $\mathcal{L}[u_{\lambda}]$, and $\mathcal{V}[u]=0$ implies $\lambda_{0}=1$. 
	\end{itemize}
	Thus, from above results and $\mathcal{V}[u]=0$, one has $d_{\mathfrak{N}}\le \mathcal{L}[u_{\lambda^{*}}]\le \mathcal{L}[u]$. Finally, by the arbitrariness of $u\in\mathfrak{M}$, one obtains $d_{\mathfrak{N}}\le d_{\mathfrak{M}}$ naturally. 
	
\end{proof}

\section{Sharp Criteria of Blowup}\label{section4}

In this section, we investigate the  sharp criteria of global existence and blowup  for Eq. \eqref{Hartree equation} by constructing invariant manifolds, as follows.
\begin{prop}\label{invariant}
	Let $D\ge 3$ and $\omega>0$. If $1+\frac{4}{D}\le p<1+\frac{4}{D-2}$ and $\omega>0$, then $\mathfrak{K}$, $\mathfrak{K^{+}}$, and $\mathfrak{R^{+}}$ are invariant manifolds of the Cauchy problem (\ref{Hartree equation})-(\ref{initial data}). 
\end{prop}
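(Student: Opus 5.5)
The plan is the standard continuity argument of the cross constrained method. The only dynamical inputs are the conservation laws. Since $\mathcal{L}[\psi]=\mathscr{E}[\psi]+\frac{\omega}{2}\mathscr{M}[\psi]$, the functional $\mathcal{L}[\psi(t)]=\mathcal{L}[\psi_0]<d_{\mathfrak{N}}$ is constant along the flow for $t\in[0,T)$. It therefore remains to show that the signs of $\mathcal{N}$ and $\mathcal{V}$ cannot change. Both are continuous functionals on $H^{1}(\mathbb{R}^{D})$: the Hartree term is controlled by the gGN inequality \eqref{GGN}, the power term by Sobolev embedding, and $\psi\in C([0,T);H^{1})$. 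Hence $t\mapsto\mathcal{N}[\psi(t)]$ and $t\mapsto\mathcal{V}[\psi(t)]$ are continuous. If a sign changed, by the intermediate value theorem there would be a first time $t_0\in(0,T)$ at which the relevant functional vanishes. We derive a contradiction with $\mathcal{L}[\psi(t_0)]<d_{\mathfrak{N}}$ in each case.

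\emph{Invariance of $\mathfrak{R}^{+}$.} Suppose $\mathcal{N}[\psi(t)]>0$ on $[0,t_0)$ and $\mathcal{N}[\psi(t_0)]=0$. If $\psi(t_0)\neq0$, then $\psi(t_0)\in\mathfrak{N}$, so $\mathcal{L}[\psi(t_0)]\ge d_{\mathfrak{N}}$, a contradiction. The case $\psi(t_0)=0$ needs a separate argument. Mass conservation forces $\psi_0=0$, which contradicts $\mathcal{N}[\psi_0]>0$. Alternatively, one uses the estimate from the proof of Proposition~\ref{d>0}: if $\mathcal{N}[v]\le0$ and $v\ne0$, then $\mathcal{K}[v]\ge C>0$, so $\mathcal{N}$ cannot reach $0$ through small $\mathcal{K}$.

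\emph{Invariance of $\mathfrak{K}\cup\mathfrak{K}^{+}$ in $\mathcal{N}<0$.} The same argument shows that $\mathcal{N}[\psi(t)]<0$ persists. At a first zero $t_0$, the norm $\psi(t_0)$ is nonzero by mass conservation, so $\psi(t_0)\in\mathfrak{N}$ and $\mathcal{L}[\psi(t_0)]\ge d_{\mathfrak{N}}$, which is impossible.

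\emph{Sign of $\mathcal{V}$.} Suppose $\mathcal{N}[\psi(t)]<0$ for all $t$, $\mathcal{V}$ has a fixed sign on $[0,t_0)$, and $\mathcal{V}[\psi(t_0)]=0$. Then $\mathcal{N}[\psi(t_0)]<0$ and $\mathcal{V}[\psi(t_0)]=0$, so $\psi(t_0)\in\mathfrak{M}$. Proposition~\ref{compare} then gives $\mathcal{L}[\psi(t_0)]\ge d_{\mathfrak{M}}\ge d_{\mathfrak{N}}$, again contradicting $\mathcal{L}[\psi(t_0)]=\mathcal{L}[\psi_0]<d_{\mathfrak{N}}$. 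This rules out a sign change of $\mathcal{V}$ in both $\mathfrak{K}$ and $\mathfrak{K}^{+}$, and so proves invariance of all three sets.

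The main obstacle is the $\mathcal{V}$ step. Here the relevant level set is $\mathfrak{M}$, not $\mathfrak{N}$, and invariance hinges entirely on the comparison $d_{\mathfrak{N}}\le d_{\mathfrak{M}}$ from Proposition~\ref{compare}. This is exactly why that comparison was established first. A minor technical point is the continuity of the Hartree functional in $H^{1}$. We handle it by writing the difference of the quartic terms as a multilinear expression and bounding each piece with the Hardy–Littlewood–Sobolev inequality, or equivalently with \eqref{GGN} and polarization.
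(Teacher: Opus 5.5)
Your proposal is correct and follows the route the paper indicates but omits: the continuity argument in the style of Zhang (2005). It uses conservation of $\mathcal{L}=\mathscr{E}+\frac{\omega}{2}\mathscr{M}$, mass conservation to guarantee $\psi(t_0)\neq 0$ so that a first zero of $\mathcal{N}$ lies in $\mathfrak{N}$, and the comparison $d_{\mathfrak{N}}\le d_{\mathfrak{M}}$ of Proposition~\ref{compare} to exclude a first zero of $\mathcal{V}$ while $\mathcal{N}<0$. Your ``alternative'' remark about $\mathcal{K}$ being bounded below is not needed, since mass conservation alone settles the case $\psi(t_0)=0$.
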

\begin{proof}
The proof of Proposition \ref{invariant} is a direct consequence of the conservation laws and the variational definitions by the continuity argument, and it is quite similar to that in \cite{Zhang2005}. Here, we omit the  detailed proof. 
	
\end{proof}
Now, let's recall the virial identity. Suppose that  $D\ge 3$,  $\psi_{0}\in H^{1}(\mathbb{R}^{D})$, $|x|\psi_{0}\in L^{2}(\mathbb{R}^{D})$ and $\psi(t, x)$ is the corresponding solution of the Cauchy problem (\ref{Hartree equation})-(\ref{initial data}) on $[0, T)$. Denote $\mathcal{G}(t)=\int|x|^{2}|\psi|^{2}dx$. Then, for all $t\in[0, T)$, 
\begin{equation}
	\mathcal{G}'(t)=4Im\int x\bar{\psi}\nabla\psi dx, 
\end{equation}
\begin{equation}\label{virial}
	\begin{split}
		\mathcal{G}''(t)&=16E[\psi]+\frac{16-4D(p-1)}{p+1}\int |\psi|^{p+1}dx. 
	\end{split}
\end{equation}
We finish the proof of Theorem \ref{Theorem 4.1}. 
\begin{proof}{\it (The proof of Theorem \ref{Theorem 4.1})} Firstly, when $\psi_{0}\in\mathfrak{K}$, it follows from Proposition \ref{invariant} that $\psi(t, \cdot)\in\mathfrak{K}$ for $t\in[0, T)$. Fix $t\in[0, T)$ and $\psi(t, \cdot)=\psi$, one has $\mathcal{L}[\psi]<d_{\mathfrak{N}}$, $\mathcal{N}[\psi]<0$, and $\mathcal{V}[\psi]<0$. Notice that the following relation holds. 
	\begin{equation}
		\mathcal{G}''(t)=8\mathcal{V}[\psi]. 
	\end{equation}
	Considering the scaling $\psi_{\lambda}=\lambda^{\frac{D}{p+1}}\psi(\lambda x)$,  one has
	\begin{equation}
		\begin{split}
		\mathcal{L}[\psi_{\lambda}]&=\frac{1}{2}\lambda^{\frac{2D}{p+1}+2-D}\int |\nabla\psi|^{2}dx+\frac{\omega}{2}\lambda^{\frac{2D}{p+1}-D}\int |\psi|^{2}dx-\frac{1}{p+1}\int |\psi|^{p+1}dx\\
		&\quad-\frac{1}{4}\lambda^{\frac{4D}{p+1}-2D+2}\int (|x|^{-2}*|\psi|^{2})|\psi|^{2}dx, 
		\end{split}
	\end{equation}
	\begin{equation}
		\begin{split}
		\mathcal{N}[\psi_{\lambda}]&=\lambda^{\frac{2D}{p+1}+2-D}\int |\nabla\psi|^{2}dx+\lambda^{\frac{2N}{p+1}-D}\int |\psi|^{2}dx-\int |\psi|^{p+1}dx\\
		&\quad-\lambda^{\frac{4D}{p+1}-2D+2}\int (|x|^{-2}*|\psi|^{2})|\psi|^{2}dx, 
		\end{split}
	\end{equation}
	\begin{equation}
		\begin{split}
			\mathcal{V}[\psi_{\lambda}]&=\lambda^{\frac{2D}{p+1}+2-D}\int |\nabla\psi|^{2}dx-\frac{D(p-1)}{2(p+1)}\int |\psi|^{p+1}dx\\
			&\quad-\frac{1}{2}\lambda^{\frac{4D}{p+1}-2N+2}\int (|x|^{-2}*|\psi|^{2})|\psi|^{2}dx.
		\end{split}
    \end{equation}
	Note that the  following relationship holds naturally, 
	\begin{equation*}
		\frac{2D}{p+1}+2-D>\frac{4D}{p+1}-2D+2>\frac{2D}{p+1}-D, \quad\text{and}\quad \frac{2D}{p+1}+2-D>0.
	\end{equation*}
which indicates that  $\lim\limits_{\lambda\to+\infty}\mathcal{V}[\psi_{\lambda}]=+\infty$. By the continuity of $\lambda$, one can find a $\lambda^{*}>1$ such that $\mathcal{V}[\psi_{\lambda^{*}}]=0$. Moreover, $\mathcal{V}[\psi_{\lambda}]<0$ for any $\lambda\in[1, \lambda^{*})$. We now distinguish cases based on the sign of    $\mathcal{N}[\psi_{\lambda}]$. 
	\begin{itemize}
		\item[(i)] $\forall$\ $\lambda\in[1, \lambda^{*}]$, one has $\mathcal{N}[\psi_{\lambda}]<0$; 
		\item[(ii)] $\exists$ $\mu\in(1, \lambda^{*}]$ such that $\mathcal{N}[\psi_{\mu}]=0$. 
	\end{itemize}
	For (i), one has $\mathcal{N}[\psi_{\lambda^{*}}]<0$ and $\mathcal{V}[\psi_{\lambda^{*}}]=0$, which implies $\psi_{\lambda^{*}}\in\mathfrak{M}$, then $\mathcal{L}[\psi_{\lambda^{*}}]\ge d_{\mathfrak{M}}\ge d_{\mathfrak{N}}>\mathcal{L}[\psi]$. After some computations, one derives
	\begin{equation}
		\begin{split}
			\mathcal{L}[\psi]-\mathcal{L}[\psi_{\lambda^{*}}]&=\frac{1}{2}(1-\lambda^{*\frac{2D}{p+1}+2-D})\int |\nabla\psi|^{2}dx+\frac{\omega}{2}(1-\lambda^{*\frac{2D}{p+1}-D})\int |\psi|^{2}dx\\
			&\quad-\frac{1}{4}(1-\lambda^{*\frac{4D}{p+1}+2-2D})\int (|x|^{-2}*|\psi|^{2})|\psi|^{2}dx\\
			&\ge \frac{1}{2}(\mathcal{V}[\psi]-\mathcal{V}[\psi_{\lambda^{*}}])\\
			&=\frac{1}{2}\mathcal{V}[\psi], 
		\end{split}
	\end{equation}
	and then
	\begin{equation}\label{3.3}
		\mathcal{V}[\psi]\le 2(\mathcal{L}[\psi]-\mathcal{L}[\psi_{\lambda^{*}}])\le 2(\mathcal{L}[\psi_{0}]-d_{\mathfrak{N}})<0.  
	\end{equation}
	
	For (ii), $\mathcal{N}[\psi_{\mu}]=0$ implies $\psi_{\mu}\in\mathfrak{N}$, then $\mathcal{L}[\psi_{\mu}]\ge d_{\mathfrak{N}}>\mathcal{L}[\psi]$, similar argument as in (i), one has
	\begin{equation}
		\mathcal{L}[\psi]-\mathcal{L}[\psi_{\mu}]\ge \frac{1}{2}(\mathcal{V}[\psi]-\mathcal{V}[\psi_{\mu}])\ge \frac{1}{2}\mathcal{V}[\psi].
	\end{equation}
It follows that 
	\begin{equation}\label{3.2}
		\mathcal{V}[\psi]\le 2(\mathcal{L}[\psi]-\mathcal{L}[\psi_{\mu}])\le 2(\mathcal{L}[\psi_{0}]-d_{\mathfrak{N}})<0, 
	\end{equation}
	 By (\ref{3.3}), (\ref{3.2}), and conservation laws, one deduces that 
	\begin{equation*}
		\mathcal{G}''(t)=8\mathcal{V}[\psi]\le 16(\mathcal{L}[\psi]-d_{\mathfrak{N}})<0. 
	\end{equation*}
	By  the definition of $\mathcal{G}(t)$, there is a finite time $T^{*}$ such that the corresponding  solution $\psi(t, x)$ to the Cauchy problem (\ref{Hartree equation})-(\ref{initial data}) blows up. 

	Next, we deal with $\psi_{0}\in\mathfrak{K^{+}}$, it follows from the Proposition \ref{invariant} that $\mathcal{L}[\psi]<d_{\mathfrak{N}}$, $\mathcal{N}[\psi]<0$, and $\mathcal{V}[\psi]>0$. Then, one deduces that 
	\begin{equation}
		\mathcal{L}[\psi]-\frac{1}{2}\mathcal{V}[\psi]=\frac{\omega}{2}\int |\psi|^{2}dx+\Big(\frac{D(p-1)}{4(p+1)}-\frac{1}{p+1}\Big)\int |\psi|^{p+1}dx<d_{\mathfrak{N}}, 
	\end{equation}
	then $\frac{\omega}{2}\int |\psi|^{2}dx<d_{\mathfrak{N}}$. 
	By utilizing the same scaling in Theorem \ref{Theorem 4.1}, one can see that $\lim\limits_{\lambda\to0^{+}}\mathcal{V}[\psi_{\lambda}]=-\infty$, then there exists $\lambda^{*}\in(0, 1)$ such that $\mathcal{V}[\psi_{\lambda^{*}}]=0$. $\mathcal{N}[\psi_{\lambda^{*}}]$ has following possibilities. 
	
	(i) $\mathcal{N}[\psi_{\lambda^{*}}]<0$. One can see that $\psi_{\lambda^{*}}\in\mathfrak{M}$, then $\mathcal{L}[\psi_{\lambda^{*}}]\ge d_{\mathfrak{M}}\ge d_{\mathfrak{N}}>\mathcal{L}[\psi]$, and for all time, one admits that 
	\begin{equation}
		\begin{split}
		\mathcal{L}[\psi]-\mathcal{L}[\psi_{\lambda^{*}}]&=\frac{1}{2}(1-\lambda^{*\frac{2D}{p+1}+2-D})\int |\nabla\psi|^{2}dx+\frac{\omega}{2}(1-\lambda^{*\frac{2D}{p+1}-D})\int |\psi|^{2}dx\\
		\vspace{0.2cm}
		&\quad-\frac{1}{4}(1-\lambda^{*\frac{4D}{p+1}+2-2D})\int (|x|^{-2}*|\psi|^{2})|\psi|^{2}dx\\
			\vspace{0.2cm}
		&<0.
		\end{split}
	\end{equation}
	By combining with $\frac{\omega}{2}\int |\psi|^{2}dx<d_{\mathfrak{N}}$, one derives $\int|\nabla\psi|^{2}dx\le C$. 
	
	(ii) $\mathcal{N}[\psi_{\lambda^{*}}]\ge 0$. One has 
	\begin{equation}
		\begin{split}
		\mathcal{L}[\psi_{\lambda^{*}}]-\frac{1}{2}\mathcal{V}[\psi_{\lambda^{*}}]&=\frac{\omega}{2}\lambda^{*\frac{2D}{p+1}-D}\int |\psi|^{2}dx+\Big(\frac{D(p-1)}{4(p+1)}-\frac{1}{p+1}\Big)\int |\psi|^{p+1}dx\\
			\vspace{0.2cm}
		&\le \lambda^{*\frac{2D}{p+1}-D}d_{\mathfrak{N}}. 
		\end{split}
	\end{equation}
	For $1+\frac{4}{D}\le p<3$, one dervies
	\begin{equation}
		\begin{split}
			\mathcal{L}[\psi_{\lambda^{*}}]-\frac{1}{p+1}\mathcal{N}[\psi_{\lambda^{*}}]&=(\frac{1}{2}-\frac{1}{p+1})\mathcal{K}[\psi_{\lambda^{*}}]+(\frac{1}{p+1}-\frac{1}{4})\int (|x|^{-2}*|\psi_{\lambda^{*}}|^{2})|\psi_{\lambda^{*}}|^{2}dx\\
				\vspace{0.2cm}
			&\le \lambda^{\frac{2D}{p+1}-D}d_{\mathfrak{N}}. 
		\end{split}
	\end{equation}
	For $3\le p<1+\frac{4}{D-2}$, one obtains
	\begin{equation}
		\begin{split}
			\mathcal{L}[\psi_{\lambda^{*}}]-\frac{1}{p+1}\mathcal{N}[\psi_{\lambda^{*}}]&=(\frac{1}{2}-\frac{1}{p+1})\mathcal{K}[\psi_{\lambda^{*}}]+(\frac{1}{4}-\frac{1}{p+1})\int |\psi_{\lambda^{*}}|^{p+1}dx\\
				\vspace{0.2cm}
			&\le \lambda^{\frac{2D}{p+1}-D}d_{\mathfrak{N}}. 
		\end{split}
	\end{equation}
	Above results show that, for all time, $\psi$ is bounded in $H^{1}(\mathbb{R}^{D})$. 
	
	Then, we handle the case $\psi_{0}\in\mathfrak{R^{+}}$. It follows from the Proposition \ref{invariant} that $\mathcal{L}[\psi]<d_{\mathfrak{N}}$ and $\mathcal{N}[\psi]>0$. Then, for all time, one has following estimate
	\begin{equation}
		\int |\psi|^{p+1}dx+\int (|x|^{-2}*|\psi|^{2})|\psi|^{2}dx<\mathcal{K}[\psi]. 
	\end{equation}
When $1+\frac{4}{D}\le p<3$,  one sees that  
	\begin{equation}
		\frac{1}{2}\mathcal{K}[\psi]-\frac{1}{4}(\int |\psi|^{p+1}dx+\int (|x|^{-2}*|\psi|^{2})|\psi|^{2}dx)<d_{\mathfrak{N}}, 
	\end{equation}
When $3\le p<1+\frac{4}{D-2}$, it follows that  
	\begin{equation}
		\frac{1}{2}\mathcal{K}[\psi]-\frac{1}{p+1}(\int |\psi|^{p+1}dx+\int (|x|^{-2}*|\psi|^{2})|\psi|^{2}dx)<d_{\mathfrak{N}}, 
	\end{equation}
	Therefore, the above two cases imply that $\psi$ is bounded in $H^{1}(\mathbb{R}^{D})$. 
	
	Finally, combining with above results, one can see that the solution $\psi(t, x)$ of the Cauchy problem (\ref{Hartree equation}) and (\ref{initial data}) globally exists in $t\in[0, +\infty)$
	by local well-posedness theory. 
	
\end{proof}

\begin{thm}
	Let $D\ge 3$. If $1+\frac{4}{D}\le p<1+\frac{4}{D-2}$ and $\mathcal{L}[\psi_{0}]<d_{\mathfrak{N}}$, then $\psi_{0}\in\mathfrak{K}$ iff $\psi(t, x)$ blows up in a finite time. 
\end{thm}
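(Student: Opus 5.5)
The two directions follow from Theorem \ref{Theorem 4.1} and the decomposition \eqref{4.03} in Remark \ref{1.16}.

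\emph{Sufficiency.} If $\psi_{0}\in\mathfrak{K}$, then part (i) of Theorem \ref{Theorem 4.1} applies directly, and the solution blows up in finite time. As in that theorem, we work in the setting where the virial identity \eqref{virial} is available, namely $|x|\psi_0\in L^2$.

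\emph{Necessity.} Assume $\mathcal{L}[\psi_{0}]<d_{\mathfrak{N}}$ and that the solution blows up in finite time. By \eqref{4.03}, $\psi_{0}$ lies in one of $\mathfrak{K}$, $\mathfrak{K}^{+}$, $\mathfrak{R}^{+}$. If $\psi_{0}\in\mathfrak{K}^{+}\cup\mathfrak{R}^{+}$, part (ii) of Theorem \ref{Theorem 4.1} gives global existence, which is a contradiction. Hence $\psi_{0}\in\mathfrak{K}$.

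The main obstacle is justifying \eqref{4.03}: every $\psi$ with $\mathcal{L}[\psi]<d_{\mathfrak{N}}$ must satisfy one of the strict sign conditions defining the three sets. We need to exclude two boundary cases.

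\emph{Case $\mathcal{N}[\psi]=0$.} If $\psi\neq0$, then $\psi\in\mathfrak{N}$, so $\mathcal{L}[\psi]\ge d_{\mathfrak{N}}$, a contradiction. The zero function has $\mathcal{N}[0]=0$ and $\mathcal{L}[0]=0<d_{\mathfrak{N}}$ by Proposition \ref{d>0}, but its solution is global, so we exclude it in the necessity direction. In other words, we read the decomposition on $H^1\setminus\{0\}$ with $\mathcal{R}^{+}$ understood to include $\{0\}$, which is harmless.

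\emph{Case $\mathcal{N}[\psi]<0$ and $\mathcal{V}[\psi]=0$.} Then $\psi\in\mathfrak{M}$, and Proposition \ref{compare} gives $\mathcal{L}[\psi]\ge d_{\mathfrak{M}}\ge d_{\mathfrak{N}}$, again a contradiction.

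These two cases exhaust the boundary, so the trichotomy holds. It then remains only to cite Theorem \ref{Theorem 4.1}, which closes the equivalence.
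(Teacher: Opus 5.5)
Your proposal is correct and takes the paper's route: the paper's proof just invokes Theorem \ref{Theorem 4.1} together with the decomposition \eqref{4.03} of Remark \ref{1.16}. Your verification of \eqref{4.03}, using the definition of $d_{\mathfrak{N}}$ and Proposition \ref{compare}, supplies the ``fundamental computations'' the paper omits. So do your two side remarks: that $\psi_0=0$ must be handled separately, and that part (i) implicitly requires $|x|\psi_0\in L^2$ for the virial identity.
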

\begin{proof}
	This theorem is a direct result by Theorem \ref{Theorem 4.1} and Remark \ref{1.16}. 
	
\end{proof}

\begin{thm}
	Let $D\ge 3$. If $\mathcal{K}[\psi_{0}]<2d_{\mathfrak{N}}$, then the solution $\psi(t, x)$ to the Cauchy problem (\ref{Hartree equation}) and (\ref{initial data}) exists globally for all time. 
\end{thm}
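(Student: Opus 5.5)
The plan is to show that $\mathcal{K}[\psi_{0}]<2d_{\mathfrak{N}}$ places $\psi_{0}$ in $\mathfrak{R}^{+}$ (or gives $\psi_0=0$, which is trivial). Then Theorem \ref{Theorem 4.1}(ii) applies. Throughout we work in the range $1+\frac{4}{D}\le p<1+\frac{4}{D-2}$, where $d_{\mathfrak{N}}>0$ by Proposition \ref{d>0}.

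\emph{Step 1: $\mathcal{L}[\psi_0]<d_{\mathfrak{N}}$.} The nonlinear terms enter $\mathcal{L}$ with a minus sign, so
$$\mathcal{L}[\psi_{0}]\le\tfrac12\mathcal{K}[\psi_{0}]<d_{\mathfrak{N}}.$$

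\emph{Step 2: $\mathcal{N}[\psi_0]>0$.} Argue by contradiction and suppose $\mathcal{N}[\psi_{0}]\le 0$ with $\psi_0\neq0$. Along the ray $\lambda\psi_{0}$ we have
$$\mathcal{N}[\lambda\psi_{0}]=\lambda^{2}\mathcal{K}[\psi_0]-\lambda^{p+1}\int|\psi_0|^{p+1}dx-\lambda^{4}\int(|x|^{-2}*|\psi_0|^{2})|\psi_0|^{2}dx.$$
This is positive for small $\lambda>0$, by the same Sobolev and gGN estimate used in Proposition \ref{d>0}. By continuity there is $\lambda_{0}\in(0,1]$ with $\mathcal{N}[\lambda_{0}\psi_{0}]=0$, so $\lambda_0\psi_0\in\mathfrak{N}$. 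The representation \eqref{3.10} gives $\mathcal{L}=\mathcal{L}_k$ on $\mathfrak{N}$. Hence
$$d_{\mathfrak{N}}\le\mathcal{L}[\lambda_{0}\psi_{0}]=\tfrac12\mathcal{K}[\lambda_0\psi_0]-\tfrac1{p+1}\int|\lambda_0\psi_0|^{p+1}dx-\tfrac14\int(|x|^{-2}*|\lambda_0\psi_0|^{2})|\lambda_0\psi_0|^{2}dx<\tfrac12\lambda_0^{2}\mathcal{K}[\psi_0]\le\tfrac12\mathcal{K}[\psi_0]<d_{\mathfrak{N}}.$$
The strict middle inequality holds because the nonlinear terms are positive for $\psi_0\ne0$. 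This is a contradiction, so $\mathcal{N}[\psi_{0}]>0$ and $\psi_{0}\in\mathfrak{R}^{+}$.

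\emph{Step 3: Conclusion.} Theorem \ref{Theorem 4.1}(ii) now gives global existence. Alternatively, argue directly: $\mathfrak{R}^{+}$ is invariant by Proposition \ref{invariant}, and on $\mathfrak{R}^{+}$ one has
$$\mathcal{L}\ge\min\{\tfrac14,\tfrac12-\tfrac1{p+1}\}\,\mathcal{K},$$
using the $\mathcal{L}_k$ splitting with $\mathcal{N}>0$. Since $\mathcal{L}[\psi(t)]=\mathcal{L}[\psi_{0}]$ by conservation of mass and energy, this bounds $\|\psi(t)\|_{H^1}$ uniformly. The blow-up alternative then yields $T=+\infty$.

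The main obstacle is Step 2, which must produce a point of $\mathfrak{N}$ on the ray with controlled $\mathcal{L}$. The key observation is that $\mathcal{L}<\frac12\mathcal{K}$ for nonzero functions, so no fine estimate of $\lambda_0$ is needed. A second subtlety is that the statement does not restrict $p$, while $d_{\mathfrak{N}}$ and Theorem \ref{Theorem 4.1} are only set up for $1+\frac4D\le p<1+\frac4{D-2}$. I would state this restriction explicitly. Any $p$ for which $d_{\mathfrak{N}}>0$ and invariance of $\mathfrak{R}^{+}$ hold works by the same argument.
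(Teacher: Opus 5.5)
Your proposal is correct and takes essentially the same route as the paper. You first get $\mathcal{L}[\psi_0]\le\frac12\mathcal{K}[\psi_0]<d_{\mathfrak{N}}$, then rule out $\mathcal{N}[\psi_0]\le 0$ by moving along the ray $\lambda\psi_0$ to a point of $\mathfrak{N}$ where $\mathcal{L}<\frac12\lambda^2\mathcal{K}[\psi_0]<d_{\mathfrak{N}}$, and finally apply Theorem \ref{Theorem 4.1}(ii) on $\mathfrak{R}^{+}$. Taking $\lambda_0\in(0,1]$ (which covers $\mathcal{N}[\psi_0]=0$) and stating the implicit restriction $1+\frac4D\le p<1+\frac4{D-2}$ explicitly are small gains in care over the paper's write-up.
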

\begin{proof}
	One can see that $\mathcal{L}[\psi_{0}]<d_{\mathfrak{N}}$, if there holds $\mathcal{N}[\psi_{0}]>0$, then $\psi_{0}\in\mathfrak{R^{+}}$. It follows from Theorem \ref{Theorem 4.1} that the corresponding solution $\psi(t, x)$ to the Cauchy problem (\ref{Hartree equation}) and (\ref{initial data}) exists globally for all time. Assume by contradiction, if $\mathcal{N}[\psi_{0}]\le 0$, then one can find a $\lambda\in(0, 1)$ such that $\mathcal{N}[\lambda\psi_{0}]=0$. It follows that  $\lambda\psi_{0}\in\mathfrak{N}$, which implies $\mathcal{L}[\lambda\psi_{0}]\ge d_{\mathfrak{N}}$. 
	
	However, the following observation 
	\begin{equation}
		\mathcal{K}[\lambda\psi_{0}]=\lambda^{2}\mathcal{K}[\psi_{0}]<2d_{\mathfrak{N}}, 
	\end{equation}
 implies $\mathcal{L}[\lambda\psi_{0}]<d_{\mathfrak{N}}$.  This leads a contradiction.  
	
\end{proof}

\section{Stability Result for Standing Wave}\label{section5}
In this section, we will investigate the stability of standing waves for Eq. \eqref{Hartree equation}. By utilizing profile decomposition theory, we obtain the orbital stability of standing waves, and we also derive the strong instability of standing waves by blow-up. 

\subsection{Orbital Stability}
In this subsection, we apply the profile decomposition theory to finish the proof of Theorem~\ref{Theorem 5.1}, and this method has been successfully applied in studying dynamical properties for many physical models, see~\cite{HK2005,Zhu2016}. Now, we introduce two useful propositions. 
\begin{prop}\label{prop 5.3}
	Let $D\ge 3$. If $1<p<1+\frac{4}{D}$ and $0<m<\|\nabla W\|_{2}^{2}$, then there exists $u\in H^{1}(\mathbb{R}^{D})$ such that $d_{m}=E[u]$. 
\end{prop}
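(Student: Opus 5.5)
We argue by the concentration-compactness route, organized through the profile decomposition of Proposition~\ref{prop 5.2}, in four steps.

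\textbf{Step 1: $d_m$ is finite and minimizing sequences are bounded.} For $\|u\|_2^2=m$, the gGN inequality \eqref{GGN} gives
\[
\frac14\int(|x|^{-2}*|u|^2)|u|^2dx\le \frac{m}{2\|\nabla W\|_2^2}\|\nabla u\|_2^2 .
\]
The GN inequality \eqref{GN} with $q=\frac{p-1}{2}$ gives
\[
\frac1{p+1}\|u\|_{p+1}^{p+1}\le C m^{\frac{p+1}{2}-\frac{D(p-1)}{4}}\|\nabla u\|_2^{\frac{D(p-1)}{2}},
\]
and the exponent satisfies $\frac{D(p-1)}{2}<2$ since $p<1+\frac4D$. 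Hence
\[
\mathscr{E}[u]\ge \frac12\Big(1-\frac{m}{\|\nabla W\|_2^2}\Big)\|\nabla u\|_2^2-C\|\nabla u\|_2^{\frac{D(p-1)}{2}},
\]
and the leading coefficient is positive because $m<\|\nabla W\|_2^2$. Therefore $d_m>-\infty$, and every minimizing sequence $\{u_n\}$ is bounded in $H^1(\mathbb{R}^D)$.

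\textbf{Step 2: $d_m<0$ and strict subadditivity.} Take the mass-preserving scaling $u_\lambda=\lambda^{D/2}u(\lambda x)$. Then
\[
\mathscr{E}[u_\lambda]=\frac{\lambda^2}{2}\Big(\|\nabla u\|_2^2-\frac12\int(|x|^{-2}*|u|^2)|u|^2\Big)-\frac{\lambda^{D(p-1)/2}}{p+1}\|u\|_{p+1}^{p+1}.
\]
Since $\frac{D(p-1)}{2}<2$, this is negative for small $\lambda>0$, so $d_m<0$ for every $m>0$.

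Next, for $\theta>1$ and $\|u\|_2^2=m$, the homogeneity of the nonlinear terms gives
\[
\mathscr{E}[\sqrt\theta u]=\theta\mathscr{E}[u]-\frac{\theta^2-\theta}{4}\int(|x|^{-2}*|u|^2)|u|^2-\frac{\theta^{\frac{p+1}{2}}-\theta}{p+1}\|u\|_{p+1}^{p+1}<\theta\mathscr{E}[u].
\]
Along a minimizing sequence, $\|u_n\|_{p+1}$ stays bounded away from zero, since otherwise Step 1 would force $\liminf\mathscr{E}[u_n]\ge 0>d_m$. This yields $d_{\theta m}<\theta d_m$, and the standard argument then gives strict subadditivity:
\[
d_m<d_\alpha+d_{m-\alpha},\qquad 0<\alpha<m .
\]

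\textbf{Step 3: profile decomposition of a minimizing sequence.} Apply Proposition~\ref{prop 5.2} to $\{u_n\}$, using \eqref{2.51}, \eqref{2.71}, \eqref{2.81} and the gradient splitting (understood as $\|\nabla u_n^J\|_2^2$). Then
\[
\mathscr{E}[u_n]=\sum_{j\le J}\mathscr{E}[U^j]+\mathscr{E}[u_n^J]+o(1).
\]
For the remainder we have $\|u_n^J\|_{p+1}\to0$ as $J\to\infty$, and the gGN inequality shows its Hartree term is at most a fraction strictly less than $1$ of $\frac12\|\nabla u_n^J\|_2^2$, because its mass is at most $m<\|\nabla W\|_2^2$. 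Hence $\liminf_J\limsup_n\mathscr{E}[u_n^J]\ge0$.

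Set $m_j=\|U^j\|_2^2$, so that $\sum_j m_j\le m$. This gives
\[
d_m\ge\sum_j\mathscr{E}[U^j]\ge\sum_j d_{m_j}.
\]
Vanishing (all $U^j=0$) is excluded by $d_m<0$. Since $d_\mu$ is decreasing in $\mu$ (by $d_{\theta\mu}<\theta d_\mu<d_\mu$ for $\theta>1$), strict subadditivity forces exactly one nonzero profile with $m_1=m$ and $\mathscr{E}[U^1]=d_m$. Then $u:=U^1$ is the desired minimizer.

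\textbf{Main obstacle.} The delicate step is this last bookkeeping. I must rule out loss of mass to the remainder ($m_1<m$) and dichotomy into several profiles. The first is handled by the monotonicity $d_{m_1}>d_m$ when $m_1<m$; the second by strict subadditivity. Both rest on the uniform lower bound on $\|u_n\|_{p+1}$, which I will secure from $d_m<0$ together with the gGN control of the Hartree term via $m<\|\nabla W\|_2^2$.
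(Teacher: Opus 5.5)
Your proof is correct and follows essentially the paper's route: a bounded minimizing sequence from the gGN/GN coercivity bound, $d_m<0$ from the $L^2$-invariant scaling, the profile decomposition, and the amplitude scaling $\lambda U$, which makes $\mathscr{E}[\lambda U]$ strictly less than $\lambda^2\mathscr{E}[U]$ once the nonlinear terms are bounded below. The difference is only in bookkeeping: the paper rescales each profile (and the remainder) directly to mass $m$ and sums, i.e.\ it uses that $d_\mu/\mu$ is strictly decreasing, which handles infinitely many profiles in one stroke; you package the same fact as strict subadditivity and monotonicity of $\mu\mapsto d_\mu$, and control the remainder through gGN and $\|u_n^J\|_{p+1}\to 0$ instead of rescaling it.
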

\begin{proof}
	To begin with, the variational problem  (\ref{2.5}) is well-defined according to the sharp GN inequality (\ref{GN}) and (\ref{GGN}). Indeed, it follows from the energy functional that one has following estimate. 
	\begin{equation}
		\begin{split}
			\mathscr{E}[u]&=\frac{1}{2}\int |\nabla u|^{2}dx-\frac{1}{4}\int (|x|^{-2}*|u|^{2})|u|^{2}dx-\frac{1}{p+1}\int |u|^{p+1}dx\\
			&\ge \frac{1}{2}\int |\nabla u|^{2}dx-\frac{1}{2\| W\|_{2}^{2}}\|u\|_{2}^{2}\|\nabla u\|_{2}^{2}-\frac{1}{2\|R\|_{2}^{p-1}}\|u\|_{2}^{2-\frac{D-2}{2}(p-1)}\|\nabla u\|_{2}^{\frac{N(p-1)}{2}}\\
			&=\frac{1}{2}\Big(1-\frac{\|u\|_{2}^{2}}{\|W\|_{2}^{2}}\Big)\|\nabla u\|_{2}^{2}-\frac{1}{2\|R\|_{2}^{p-1}}\|u\|_{2}^{2-\frac{D-2}{2}(p-1)}\|\nabla u\|_{2}^{\frac{N(p-1)}{2}}\\
			&\ge \frac{1}{2}\Big(1-\frac{\|u\|_{2}^{2}}{\| W\|_{2}^{2}}\Big)\|\nabla u\|_{2}^{2}-\varepsilon\|\nabla u\|_{2}^{2}-C(\varepsilon, R, N, p), 
		\end{split}
	\end{equation}
	where $C(\varepsilon, R, N, p)$ is a constant. By taking $\varepsilon=\frac{1}{4}\Big(1-\frac{\|u\|_{2}^{2}}{\|W\|_{2}^{2}}\Big)$, one has following 
	\begin{equation}\label{5.6}
		\frac{1}{4}\Big(1-\frac{\|u\|_{2}^{2}}{\| W\|_{2}^{2}}\Big)\|\nabla u\|_{2}^{2}-C(\varepsilon, R, N, p)\le \mathscr{E}[u]. 
	\end{equation}
	Next, we select a sequence $\{u_{n}\}_{n=1}^{+\infty}$ that minimizes the functional associated with problem~(\ref{2.5}), then 
	\begin{equation}
		\|u_{n}\|_{2}^{2}= m, \quad \mathscr{E}[u_{n}]\to d_{m}, \quad\text{as}\quad n\to+\infty. 
	\end{equation}
	Thus, $d_{m}+1$ can be regarded as a upper bound of $\mathscr{E}[u_{n}]$ for $n$ large enough, combine this with (\ref{5.6}), one has
	\begin{equation*}
		\frac{1}{4}\Big(1-\frac{\|u_{n}\|_{2}^{2}}{\| W\|_{2}^{2}}\Big)\|\nabla u_{n}\|_{2}^{2}\le d_{m}+1+C(\varepsilon, R, N, p), 
	\end{equation*}
	which implies the sequence $\{\nabla u_{n}\}_{n=1}^{+\infty}$ is bounded in $L^{2}(\mathbb{R}^{D})$. 
	
	Let $u$ be a fixed function in $H^{1}(\mathbb{R}^{D})$. It follows that $\|u_{\lambda}\|_{2}^{2}=\|u\|_{2}^{2}=m$ under the scaling $u_{\lambda}=\lambda^{\frac{D}{2}}u(\lambda x)$, and after some basic computations, one has 
	\begin{equation*}
	     \begin{split}
	     		\mathscr{E}[u_{\lambda}]
	     		&=\lambda^{2}\Big(\frac{1}{2}\int |\nabla u|^{2}dx-\frac{1}{4}\int (|x|^{-2}*|u|^{2})|u|^{2}dx\Big)-\frac{1}{p+1}\lambda^{\frac{D}{2}(p-1)}\int |u|^{p+1}dx. 
	     \end{split}
	\end{equation*}
    It follows from the sharp GN inequality (\ref{GN}) that, if $\|u\|_{2}^{2}<\|\nabla W\|_{2}^{2}$, then there exists $C_{1}>0$ such that
    \begin{equation*}
    	\frac{1}{2}\int |\nabla u|^{2}dx-\frac{1}{4}\int (|x|^{-2}*|u|^{2})|u|^{2}dx\ge C_{1}>0. 
    \end{equation*}
    Noting that $0<\frac{D}{2}(p-1)<2$ for $1<p<1+\frac{4}{D}$, one can select $\lambda$ suitably such that $\mathscr{E}[u_{\lambda}]<0$. Since $\|u_{\lambda}\|_{2}^{2}=\|u\|_{2}^{2}=m$, one can deduce $d_{m}<0$ from (\ref{2.5}). For $n$ large enough,  there exists $\delta>0$ such that 
    \begin{equation*}
    	\begin{split}
    		\frac{1}{4}\int (|x|^{-2}*|u_{n}|^{2})|u_{n}|^{2}dx+\frac{1}{p+1}\int |u_{n}|^{p+1}dx&=\frac{1}{2}\int |\nabla u_{n}|^{2}dx\\
    		&\ge -d_{m}+\delta. 
    	\end{split}
    \end{equation*}
    For suitable $n$, there exists $C_{0}>0$ such that
    \begin{equation}
    	\frac{1}{4}\int (|x|^{-2}*|u_{n}|^{2})|u_{n}|^{2}dx+\frac{1}{p+1}\int |u_{n}|^{p+1}dx\ge C_{0}. 
    \end{equation}
    Above arguments show that $\{u_{n}\}_{n=1}^{+\infty}$ is bounded in $H^{1}(\mathbb{R}^{D})$, and from Proposition \ref{prop 5.2},   $\{u_{n}\}_{n=1}^{+\infty}$ can be decomposed by
    	\begin{equation}\label{5.9}
    	u_{n}(x)=\sum_{j=1}^{J}U^{j}(x-x_{n}^{j})+u_{n}^{J}(x).
    \end{equation}
    Injecting into the energy functional, one derives
    \begin{equation*}
    	\mathscr{E}[u_{n}]=\sum_{j=1}^{J}\mathscr{E}[U^{j}]+\mathscr{E}[u_{n}^{J}]+o(1),
    \end{equation*}
    where $o(1)=o_{n}(1)\to 0$ as $n\to+\infty$. 
    
    Consider the scaling $U_{\lambda_{j}}^{j}=\lambda_{j}U^{j}$, where  $\lambda_{j}=\frac{m^{1/2}}{\|U^{j}\|_{2}}\ge 1$. One can find that
    \begin{equation*}
    	\|U_{\lambda_{j}}^{j}\|_{2}^{2}=m, \quad\lambda_{j}^{2}-1\ge \lambda_{j}^{p-1}-1,\quad \text{for}\quad 1<p<1+\frac{4}{D}. 
    \end{equation*}
    From the convergence of $\sum\limits_{j=1}^{J}\|U^{J}\|_{2}^{2}$, there exists $j_{0}\ge 1$ such that $\inf\limits_{j\ge 1}\lambda_{j}^{2}-1=\lambda_{j_{0}}^{2}-1$=$\Big(\frac{m^{1/2}}{\|U^{j_{0}}\|_{2}}\Big)^{2}-1$. 
    
    Injecting the scaling $U_{\lambda_{j}}^{j}=\lambda_{j}U^{j}$ into energy transfers it into 
    \begin{equation*}
    	\begin{split}
    		\mathscr{E}[U_{\lambda_{j}}^{j}]&=\frac{\lambda_{j}^{2}}{2}\int |\nabla U^{j}|^{2}dx-\frac{\lambda_{j}^{p+1}}{p+1}\int |U^{j}|^{p+1}dx-\frac{\lambda_{j}^{4}}{4}\int (|x|^{-2}*|U^{j}|^{2})|U^{j}|^{2}dx\\
    		&=\lambda_{j}^{2}\mathscr{E}[U^{j}]-\frac{\lambda_{j}^{2}(\lambda_{j}^{p-1}-1)}{p+1}\int |U^{j}|^{p+1}dx-\frac{1}{4}(\lambda_{j}^{4}-\lambda_{j}^{2})\int (|x|^{-2}*|U^{j}|^{2})|U^{j}|^{2}dx. 
    	\end{split}
    \end{equation*}
    Thus,  
    \begin{equation*}
    	\mathscr{E}[U^{j}]=\frac{\mathscr{E}[U_{\lambda_{j}}^{j}]}{\lambda_{j}^{2}}+\frac{\lambda_{j}^{p-1}-1}{p+1}\int |U^{j}|^{p+1}dx+\frac{1}{4}(\lambda_{j}^{2}-1)\int (|x|^{-2}*|U^{j}|^{2})|U^{j}|^{2}dx. 
    \end{equation*}
   Similarly, for the term $\mathscr{E}[v_{n}^{J}]$, it follows that 
    \begin{equation*}
    	\begin{split}
    		\mathscr{E}[u_{n}^{J}]&=\frac{\|u_{n}^{J}\|_{2}^{2}}{m}\mathscr{E}[\frac{m^{1/2}}{\|u_{n}^{J}\|_{2}}u_{n}^{J}]+\frac{\Big(\frac{m^{1/2}}{\|u_{n}^{J}\|_{2}}\Big)^{p-1}-1}{p+1}\int |U_{n}^{J}|^{p+1}dx\\
    		&\quad+\frac{1}{4}\Big(\frac{m}{\|u_{n}^{J}\|_{2}^{2}}-1\Big)\int (|x|^{-2}*|u_{n}^{J}|^{2})|u_{n}^{J}|^{2}dx\\
    		&\ge \frac{\|u_{n}^{J}\|_{2}^{2}}{m}\mathscr{E}[\frac{m^{1/2}}{\|u_{n}^{J}\|_{2}}u_{n}^{J}]+o(1).
    	\end{split}
    \end{equation*}
    Noting that the relation $\|U_{\lambda_{j}}^{j}\|_{2}^{2}=m=\|\frac{m^{1/2}}{\|u_{n}^{J}\|_{2}}u_{n}^{J}\|_{2}^{2}$, one has $\mathscr{E}[U_{\lambda_{j}}^{j}]\ge d_{m}$, $\mathscr{E}[\frac{m^{1/2}}{\|u_{n}^{J}\|_{2}}u_{n}^{J}]\ge d_{m}$, and $\liminf\limits_{n\to+\infty}\mathscr{E}[u_{n}]=d_{m}$. 
    Combine above estimates with $E[u_{n}]$.
    \begin{equation*}
    	\begin{split}
    		\mathscr{E}[u_{n}]&=\sum_{j=1}^{J}\mathscr{E}[V^{j}]+\mathscr{E}[u_{n}^{J}]+o(1)\\
    		&\ge \sum_{j=1}^{J}\frac{\mathscr{E}[U_{\lambda_{j}}^{j}]}{\lambda_{j}^{2}}+\frac{\lambda_{j}^{p-1}-1}{p+1}\int |U^{j}|^{p+1}dx+\frac{1}{4}(\lambda_{j}^{2}-1)\int (|x|^{-2}*|U^{j}|^{2})|U^{j}|^{2}dx\\
    		&\quad+\frac{\|u_{n}^{J}\|_{2}^{2}}{m}\mathscr{E}[\frac{m^{1/2}}{\|u_{n}^{J}\|_{2}}u_{n}^{J}]+o(1)\\
    		&\ge \sum_{j=1}^{J}\frac{d_{m}}{\lambda_{j}^{2}}+\inf_{j\ge 1}\frac{\lambda_{j}^{p-1}-1}{p+1}\sum_{j=1}^{J}\int |V^{j}|^{p+1}dx+\inf_{j\ge 1}\frac{1}{4}(\lambda_{j}^{2}-1)\sum_{j=1}^{J}\int (|x|^{-2}*|U^{j}|^{2})|U^{j}|^{2}dx\\
    		&\quad+\frac{\|u_{n}^{J}\|_{2}^{2}}{m}d_{m}+o(1)\\
    		&\ge \sum_{j=1}^{J}\frac{d_{m}}{m}\|U^{j}\|_{2}^{2}+\frac{\|u_{n}^{J}\|_{2}^{2}}{m}d_{m}+\inf_{j\ge 1}(\lambda_{j}^{2}-1)\int \frac{1}{p+1}|u_{n}|^{p+1}+\frac{1}{4}(|x|^{-2}*|u_{n}|^{2})|u_{n}|^{2}dx\\
    		&\quad-\frac{1}{p+1}\int |u_{n}^{J}|^{p+1}dx-\frac{1}{4}\int (|x|^{-2}*|u_{n}^{J}|^{2})|u_{n}^{J}|^{2}dx+o(1). 
    	\end{split}
    \end{equation*}
    Let $n\to+\infty$ and $J\to+\infty$, there exists $C_{0}>0$ such that 
    \begin{equation*}
    	d_{m}\ge d_{m}+C_{0}\Big(\frac{m}{\|U^{j_{0}}\|_{2}^{2}}-1\Big),
    \end{equation*}
which implies that there should hold $m\le \|U^{j_{0}}\|_{2}^{2}$. However, it follows from the decomposition (\ref{5.9}) that $m\ge \|U^{j_{0}}\|_{2}^{2}$, which implies $U^{j_{0}}\neq 0$ is the only term in the decomposition and $\|U^{j_{0}}\|_{2}^{2}=m$. Furthermore, one has $\mathscr{E}[U^{j_{0}}]=d_{m}$, which completes the proof. 

\end{proof}

\begin{prop}\label{prop 5.4}
	Let $D\ge 3$, $1<p<1+\frac{4}{D}$, and $0<m<\|\nabla W\|_{2}^{2}$, then $\forall\varepsilon>0$, there exists $\delta>0$ such that for any $\psi_{0}\in H^{1}(\mathbb{R}^{D})$, if 
	\begin{equation*}
		\inf_{u\in\Omega}\|\psi_{0}-u\|_{H^{1}(\mathbb{R}^{D})}<\delta, 
	\end{equation*}
	then 
	\begin{equation*}
		\inf_{u\in\Omega}\|\psi(t, x)-u\|_{H^{1}(\mathbb{R}^{D})}<\varepsilon, 
	\end{equation*}
	$\psi(t, x)$ is the corresponding solution of the Cauchy problem (\ref{Hartree equation}) with initial datum $\psi_{0}$. 
\end{prop}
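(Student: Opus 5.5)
We argue by contradiction, following the Cazenave--Lions scheme \cite{CL1982}, and use Proposition \ref{prop 5.3} in its compactness form: every minimizing sequence of \eqref{2.5} is relatively compact in $H^{1}(\mathbb{R}^{D})$ up to translations. This is exactly what the profile decomposition argument in the proof of Proposition \ref{prop 5.3} gives. There, the dichotomy estimate forces a single profile $U^{j_{0}}$ with $\|U^{j_{0}}\|_{2}^{2}=m$ and $\mathscr{E}[U^{j_{0}}]=d_{m}$. Then $\|u_{n}^{1}\|_{2}\to0$ by \eqref{2.51}, and $\|u_{n}^{1}\|_{q+1}\to0$ and the Hartree term of $u_{n}^{1}$ tends to zero. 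Energy convergence $\mathscr{E}[u_{n}]\to\mathscr{E}[U^{j_{0}}]$ then forces $\|\nabla u_{n}\|_{2}\to\|\nabla U^{j_{0}}\|_{2}$, so $u_{n}(\cdot+x_{n}^{j_{0}})\to U^{j_{0}}$ strongly in $H^{1}$. In particular $\Omega\neq\emptyset$.

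Now suppose the conclusion fails. Then there exist $\varepsilon_{0}>0$, initial data $\psi_{0,n}$ with $\inf_{u\in\Omega}\|\psi_{0,n}-u\|_{H^{1}}\to0$, and times $t_{n}$ with $\inf_{u\in\Omega}\|\psi_{n}(t_{n})-u\|_{H^{1}}\ge\varepsilon_{0}$.

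First, the solutions $\psi_{n}$ are global and uniformly bounded in $H^{1}$. Indeed, $\|\psi_{0,n}\|_{2}^{2}\to m<\|\nabla W\|_{2}^{2}$, so for large $n$ the mass stays below the threshold. The coercive bound \eqref{5.6}, together with conservation of mass and energy, then bounds $\|\nabla\psi_{n}(t)\|_{2}$ uniformly in $t$ and $n$. The blow-up alternative gives global existence, so $t_{n}$ is meaningful.

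Second, continuity of $\mathscr{E}$ and of the mass on $H^{1}$, with $\Omega$ consisting of minimizers, gives $\mathscr{E}[\psi_{0,n}]\to d_{m}$ and $\|\psi_{0,n}\|_{2}^{2}\to m$. Set $v_{n}=\psi_{n}(t_{n})$. By the conservation laws, $\mathscr{E}[v_{n}]\to d_{m}$ and $\|v_{n}\|_{2}^{2}\to m$. Normalize with $\tilde v_{n}=\frac{m^{1/2}}{\|v_{n}\|_{2}}v_{n}$. Since $\frac{m^{1/2}}{\|v_{n}\|_{2}}\to1$ and $v_{n}$ is bounded in $H^{1}$ (so all terms of $\mathscr{E}$ are bounded), we get $\|\tilde v_{n}-v_{n}\|_{H^{1}}\to0$ and $\mathscr{E}[\tilde v_{n}]\to d_{m}$. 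Thus $\{\tilde v_{n}\}$ is a minimizing sequence for \eqref{2.5}.

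Third, by the compactness statement above there are $y_{n}$ and $u\in\Omega$ with $\tilde v_{n}(\cdot+y_{n})\to u$ in $H^{1}$ along a subsequence. Since $\Omega$ is translation invariant, $u(\cdot-y_{n})\in\Omega$. Therefore
\[
\inf_{w\in\Omega}\|v_{n}-w\|_{H^{1}}\le\|v_{n}-\tilde v_{n}\|_{H^{1}}+\|\tilde v_{n}(\cdot+y_{n})-u\|_{H^{1}}\to0,
\]
which contradicts $\varepsilon_{0}>0$.

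The main obstacle is the first step: upgrading Proposition \ref{prop 5.3} from attainment to strong $H^{1}$ compactness of an arbitrary minimizing sequence modulo translations. In particular, ruling out vanishing ($d_{m}<0$ together with the positive lower bound on the nonlinear terms handles this) and dichotomy (the strict inequality coming from $\lambda_{j}^{2}-1>0$ when $\|U^{j}\|_{2}^{2}<m$) must be done carefully, including passing from weak to strong convergence via norm convergence.
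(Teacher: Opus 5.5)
Your proposal is correct and follows the paper's Cazenave--Lions contradiction argument: conservation of mass and energy make $\psi_n(t_n)$ a minimizing sequence for \eqref{2.5}, and compactness of minimizing sequences modulo translations, from the profile decomposition in Proposition~\ref{prop 5.3}, gives the contradiction. Your version is more careful than the paper's, which asserts the final step without justification and omits the mass renormalization $\tilde v_n$, the passage from a single profile to strong $H^1$ convergence, and the uniform $H^1$ bound along the sequence.
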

\begin{proof}
	Above argument shows that if $\|\psi_{0}\|_{2}^{2}\le \| \nabla W\|_{2}^{2}$, then 
	\begin{equation*}
		\frac{1}{4}\Big(1-\frac{\|\psi_{0}\|_{2}^{2}}{\|\nabla W\|_{2}^{2}}\Big)\|\nabla\psi\|_{2}^{2}\le \mathscr{E}[\psi_{0}]+C(\varepsilon, R, N, p), 
	\end{equation*}
	which implies $\psi(t, x)$ exists globally for all time. 
	
	Now, we will finish this proof by contradiction. If there exists $\varepsilon_{0}>0$ and a initial sequence $\{\psi_{0, n}\}_{n=1}^{+\infty}$ such that 
	\begin{equation*}
		\inf_{u\in\Omega}\|\psi_{0, n}(x)-u(x)\|_{H^{1}(\mathbb{R}^{D})}<\frac{1}{n}
	\end{equation*}
	and 
	\begin{equation*}
		\inf_{u\in\Omega}\|\psi_{n}(t_{n}, x)-u\|_{H^{1}(\mathbb{R}^{D})}\ge \varepsilon_{0}. 
	\end{equation*}
	And it follows from the conservation laws that
	\begin{equation*}
		\int |\psi_{n}(t_{n}, x)|^{2}dx=\int |\psi_{0, n}(x)|^{2}dx\to \int |u|^{2}dx=m, 
	\end{equation*}
	\begin{equation*}
		\mathscr{E}[\psi_{n}(t_{n}, x)]=\mathscr{E}[\psi_{0, n}(x)]\to \mathscr{E}[u]=d_{m}. 
	\end{equation*}
	Consequently, the functional value associated with the sequence 
	 $\{\psi_{n}(t_{n}, x)\}_{n=1}^{+\infty}$ converges to the infimum of problem (\ref{2.5}). Thus, we can find a minimizer $u(x)\in\Omega$ satisfying 
	\begin{equation*}
		\inf_{u\in\Omega}\|\psi_{n}(t_{n}, x)-u\|_{H^{1}(\mathbb{R}^{D})}\to 0, 
	\end{equation*}
	as $n$ large enough. This leads a contradiction. 
	
\end{proof}

\begin{proof}\textit{(The proof of Theorem \ref{Theorem 5.1}) }Proposition \ref{prop 5.3} and Proposition \ref{prop 5.4} can derive the results directly. 

\end{proof}

\subsection{Strong Instability}\label{strong instability}
In this subsection, we derive that the strong instability of standing waves for all $\omega>0$. This proof is benefited from the results in Propositions \ref{noempty} and \ref{compare}, and this kind of open problem proposed in \cite{Zhang2005} is solved. 
\begin{proof}\textit{(The proof of Theorem \ref{Theorem 5.4})}
	To start with, one can find following relation by simple computation. 
	\begin{equation}\label{4.4}
		\frac{d}{d\lambda}\mathcal{L}[\lambda\psi]=\lambda^{-1}\mathcal{N}[\lambda\psi],  
	\end{equation}
	which implies the polynomial $\mathcal{L}[\lambda\psi]$ exists unique maximum $\mathcal{L}[\lambda^{*}\psi]$, and $\mathcal{N}[\lambda^{*}\psi]=0$ there. Then, it follows from \eqref{variational problem} that $\mathcal{N}[u]=0$, for any $\lambda>1$, one has $\mathcal{N}[\lambda u]<0$. Besides, $\mathcal{N}[u]=0$ and \eqref{4.4} implies that $\mathcal{L}[\lambda u]<\mathcal{L}[u]$ for any $\lambda>0$ and $\lambda\neq1$. Thus, one deduces that $\lambda u\in\mathfrak{K}$ from $\mathcal{N}[\lambda u]<0$ and $\mathcal{L}[\lambda u]<d_{\mathfrak{N}}$. Take $\lambda$ close to $1$ such that
	\begin{equation*}
		\|\lambda u-u\|_{H^{1}(\mathbb{R}^{D})}=(\lambda-1)\|u\|_{H^{1}(\mathbb{R}^{D})}<\varepsilon. 
	\end{equation*}
	Denote $\psi_{0}=\lambda u$, it follows from Theorem \ref{Theorem 4.1} that the corresponding solution $\psi(t, x)$ of the Cauchy problem (\ref{Hartree equation})-(\ref{initial data}) blows up in a finite time. 
	
\end{proof}

\vspace{0.8cm}

{\scriptsize \begin{tabular}{l}
    \ \ \textsc{Guoyi Fu} \\
     \textsc{School of Mathematical Sciences, Sichuan Normal University}, \\ 
     \textsc{Chengdu, Sichuan 610066, China.} \\
    \ \ {\it Email address}: \texttt{guoyifu\_22@163.com} \\[1em] 
        
    \ \ \textsc{Shanshan Fu} \\
      \textsc{School of Mathematical Sciences, Sichuan Normal University}, \\ 
    \textsc{Chengdu, Sichuan 610066, China.} \\
      \ \ \textsc{and} \\
       \textsc{School of Science, Xihua University}, \\ 
      \textsc{Chengdu, Sichuan 610039, China.} \\
    \ \ {\it Email address}: \texttt{shanshanfu3@163.com} \\[1em]
    
        \ \ \textsc{Xiaoguang Li} \\
     \textsc{School of Mathematical Sciences, Sichuan Normal University}, \\ 
     \textsc{Chengdu, Sichuan 610066, China.} \\
    \ \ {\it Email address}: \texttt{lixgmath@163.com} \\[1em]
        
      \ \ \textsc{Jian Zhang}\\
      \textsc{School of Mathematical Sciences},
       \textsc{University of Electronic Science and Technology of China}, \\
       \textsc{Chengdu, 611731, China.} \\
    \ \ {\it Email address}: \texttt{zhangjian@uestc.edu.cn} \\[1em]
        
     \ \ \textsc{Shihui Zhu} \\
     \textsc{School of Mathematical Sciences, Sichuan Normal University}, \\ 
      \textsc{Chengdu, Sichuan 610066, China.} \\
     \ \ {\it  Email address}: \texttt{shihuizhumath@sicnu.edu.cn; shihuizhumath@163.com}
\end{tabular}}
\end{document}